\newtheorem{definition}{Definition}
\newtheorem{theorem}{Theorem}
\newtheorem{lemma}{Lemma}
\newtheorem{remark}{Remark}
\newtheorem{corollary}{Corollary}
\newtheorem{proposition}{Proposition}
\newtheorem{assumption}{Assumption}
\renewcommand{\>}{\rangle}
\begin{document}

\begin{frontmatter}

\title{An explicit link between graphical models and Gaussian Markov random fields on metric graphs}

\author[kaust]{David Bolin} 
\ead{david.bolin@kaust.edu.sa}
\corref{cor1}
\cortext[cor1]{Corresponding author.}

\author[kaust]{Alexandre B. Simas}
\ead{alexandre.simas@kaust.edu.sa}

\author[lund]{Jonas Wallin}
\ead{jonas.wallin@stat.lu.se}

\affiliation[kaust]{organization={Statistics Program, Computer, Electrical and Mathematical Sciences and Engineering Division}, 
addressline={King Abdullah University of Science and Technology (KAUST)}, city={Thuwal}, country={Saudi Arabia}}

\affiliation[lund]{organization={Department of Statistics}, addressline={Lund University}, city={Lund}, country={Sweden}}

\begin{abstract}
	We derive an explicit link between Gaussian Markov random fields on metric graphs and graphical models, and in particular show that a Markov random field restricted to the vertices of the graph is, under mild regularity conditions, a Gaussian graphical model with a distribution which is faithful to its pairwise independence graph, which coincides with the neighbor structure of the metric graph. 
	This is used to show that there are no Gaussian random fields on general metric graphs which are both Markov and isotropic in some suitably regular metric on the graph, such as the geodesic or resistance metrics. 
	\end{abstract}
\begin{keyword}
	metric graph\sep GMRF\sep Markov\sep graphical models\sep Gaussian process
\end{keyword}

\end{frontmatter}



\section{Introduction}
There is a growing interest in modeling data on networks and graphs in statistics, machine learning, and signal processing. 
In statistics, typical applications are modeling of traffic accidents and street crimes on road networks \cite{CHAUDHURI2023, MORADI2024} and  modeling of environmental variables such as temperature and pollutants on river networks \cite{Gardner2003, Isaak2014, mcgill2021spatiotemporal}. In machine learning and graph signal processing \cite{ortega2018graph}, common areas of application are social networks \cite{Perozzi2014}, energy networks \cite{He2018}, sensor networks \cite{Wagner2005}, transportation networks \cite{Valdivia2015}, and brain networks \cite{Bullmore2012}. 
In several of these applications, the quantity of interest is defined on both the vertices and edges of the graph, where the edges typically are curves such as river or road segments and the vertices are the intersections between these. The natural distance between measurement locations is in this case not the Euclidean distance but the geodesic (shortest path) distance on the network. A graph like this, equipped with the geodesic distance, is referred to as a metric graph. 

Specifically, a compact metric graph $\Gamma$ consists of finitely many vertices $\mathcal{V} = \{v_i\}$ and edges $\mathcal{E} = \{e_j\}$, where each edge $e$ is a rectifiable curve with finite length $l_e < \infty$ parameterized by arc length, connecting a pair of vertices. A location $s \in \Gamma$ is a position on an edge which can be represented as $(t, e)$, where $t \in [0, l_e]$. The graph is equipped with a metric $d(\cdot, \cdot)$, such as the geodesic metric which is well-defined as long as $\Gamma$ is connected.
An important special case of compact metric graphs is linear networks, where each edge is a straight line. Another special case is metric trees, which are graphs without loops. A final, commonly studied, special case involves graphs with Euclidean edges \cite{anderes2020isotropic}, which are such that there is at most one edge between any pair of vertices, no edges connecting a vertex to itself, and distances between connected vertices equal to the lengths of the corresponding edges.

Gaussian random fields are intrinsic to dealing with spatially correlated data, and due to the growing number of statistical applications involving data on networks there has been significant recent research focused on defining random fields on compact metric graphs and various special cases such as graphs with Euclidean edges and metric trees \cite{CressieRiver, Hoef2006, Hoef2010, anderes2020isotropic, BSW2022, moller2022lgcp, bolinetal_fem_graph}. 
An alternative which is also often 
used in machine learning and signal processing 
is to simply
 ignore the continuous structure of the space and only consider a process
  $u_{\mathcal{V}}$ defined at the vertices of the graph. In this case, it is natural to consider Gaussian graphical models for $u_{\mathcal{V}}$, 
  where the edges of the graph are used to define the conditional dependence structure of $u_{\mathcal{V}}$ \citep{borovitskiy2021matern,Perozzi2014}. 

A natural question, which is the main focus of this work, is if there is any connection between these two approaches. 
Specifically, our main goal is to establish an explicit link between continuously indexed Gaussian Markov random fields (GMRFs) on metric graphs and discrete graphical models \citep{rue2005gaussian}. 
For practical applications, this is important because it allows us to determine whether a graphical model could be defined in such a way that it corresponds to the restriction of a continuously indexed Gaussian random field on the metric graph to the vertices. Further, if a continuously indexed Gaussian random field corresponds to a Gaussian graphical model when evaluated at some finite number of locations, this means that we can use the vast literature on inference procedures for Gaussian graphical models when fitting the model to data. 

We answer this question by deriving a link to the pairwise independence graph (see Section~\ref{sec:review_markov} or \citep{lauritzen1996graphical,rue2005gaussian}) of the process, induced by its distribution on a set of finite points on the metric graph.
More precisely, consider a metric graph $\Gamma$ with an arbitrary metric $d(\cdot,\cdot)$, a GMRF $u$ of order 1 on $\Gamma$ with a continuous strictly positive definite covariance function and a finite collection of points $\mathcal{P}\subset \Gamma$ that include the vertices of $\Gamma$, and that has at least one point in the interior of each multiple edge of $\Gamma$. Let $P_{\mathcal{P}}$ be the distribution of $\{u(p):p\in \mathcal{P}\}$. We will later show that $P_{\mathcal{P}}$ is faithful (see Section~\ref{sec:review_markov} or \citep{lauritzen1996graphical,fallat2017total}) to its pairwise independence graph $(\mathcal{P}, \mathcal{E}(P_{\mathcal{P}}))$.

Given $\mathcal{P}\subset \Gamma$, let $\Gamma_{\mathcal{P}}$ be the combinatorial graph $(\mathcal{P}, \mathcal{E}_{\mathcal{P}})$, where $\mathcal{E}_{\mathcal{P}}$ are the edges obtained from $\Gamma$ after turning the points in $\mathcal{P}$ into vertices (that is, it splits the edges in which the points $p$ which are in the interior of edges). The faithfulness of $P_{\mathcal{P}}$ in $(\mathcal{P}, \mathcal{E}(P_{\mathcal{P}}))$ implies, in particular, that given $v_i,v_j\in\mathcal{P}$, the points $v_i$ and $v_j$ are neighbors in $\Gamma_{\mathcal{P}}$ if, and only if, they are neighbors in $(\mathcal{P}, \mathcal{E}(P_{\mathcal{P}}))$. This creates an explicit link between GMRFs of order 1 and graphical models, that is, it shows that if we consider a finite collection of points in the metric graph that contains the vertices, and that has at least one point in the interior of each multiple edge of $\Gamma$, the neighboring metric graph structure coincides with neighboring structure of its pairwise independence graph and, even more, its distribution is faithful to its pairwise independence graph. This is our main result and in particular means that theoretical results for graphical models can be applied to GMRFs on metric graphs. 

As an important application of this result, we show that  
that under a weak homogeneity condition, there are no GMRFs of order 1 that are simultaneously Markov and isotropic on general metric graphs. Here, isotropic means that the Gaussian process has a covariance function of the form $r(\cdot, \cdot) = \rho(d(\cdot,\cdot))$, where $\rho$ is an isotropic covariance function and $d$ is a metric on the graph. This construction was studied by \citet{anderes2020isotropic} where they showed that several choices of $\rho$ are possible if the graph is restricted to have Euclidean edges and the metric is chose as the resistance metric \citep[see][]{anderes2020isotropic}. For this construction, our result shows that any isotropic Gaussian field (using any metric such as the the geodesic metric or the resistance metric) constructed through the approach of \citep{anderes2020isotropic} can only be Markov if the graph is either a metric tree or a cycle. For any application, one must therefore choose between the two properties when determining which model to use.  

The outline of the paper is as follows. In Section \ref{sec:review_markov}, basic definitions and results regarding Gaussian Markov random fields are recalled. In Section \ref{sec:isotropic}, the main result containing the explicit link is stated and a few consequences of it are discussed. The proof of the main result is given in Section~\ref{sec:proof} together with a few additional results. In Section \ref{sec:illustration}, an example and the result about isotropic fields are presented. The paper ends in Section \ref{sec:discussion} with a discussion on extensions and open problems. 

\section{Gaussian graphical models and GMRFs on metric graphs}\label{sec:review_markov}
In this section, we provide a brief review of Gaussian graphical models and GMRFs on compact metric graphs. 

\subsection{Gaussian graphical models}
In this subsection, we introduce the notation we need about Gaussian graphical models. 
For a comprehensive treatment of Gaussian graphical models, we refer to \cite{lauritzen1996graphical} or \cite{rue2005gaussian}. We begin with the definition of Gaussian graphical model with respect to an undirected graph $(\mathcal{V}, \mathcal{E})$, where the set of vertices $\mathcal{V}$ is finite. 

\begin{definition}\label{def:ggm}
	Let $\mathcal{V} = \{v_i\}$ be a finite set of vertices and $U$ be a random vector indexed by $\mathcal{V}$ and following a multivariate Gaussian distribution with mean vector $\mu$ and strictly positive-definite covariance matrix $\Sigma$. Further, let $Q = \Sigma^{-1} = \{Q_{v_iv_j}\}$ be its precision matrix. A graphical model for $U$ with respect to the graph $(\mathcal{V},\mathcal{E})$, where $\mathcal{E}$ is a finite set of edges with respect to $\mathcal{V}$, is given by assuming that 
	$Q_{v_iv_j} \neq 0$ if, and only if, there exists an edge in $\mathcal{E}$ connecting $v_i$ to $v_j$.
\end{definition}

A notable feature of Gaussian graphical models is that the pairwise interactions between their coordinates are sufficient to fully characterize the dependency structure of the model. Consequently, the pairwise independence graph, which we will define shortly, plays a crucial role in the theory of Gaussian graphical models.

\begin{definition}\label{def:indep_graph}
	Given a finite set of vertices $\mathcal{V}=\{v_i\}$, a probability distribution $P$ on $\mathcal{V}$, and a random vector $U$ following the distribution $P$, the pairwise independence graph of $P$ is defined as $(\mathcal{V}, \mathcal{E}(P))$, where $v_i$ and $v_j$ are not neighbors in $\mathcal{E}(P)$, for $v_j,v_j\in\mathcal{V}$, if, and only if, $U(v_i)$ and $U(v_j)$ are conditionally independent given $\{U(v) : v\in \mathcal{V}\setminus\{v_i,v_j\}\}$. 
\end{definition}

It is well-known that a random vector \( U \) indexed by a finite set of vertices \( \mathcal{V} \) and following a multivariate Gaussian distribution is always a graphical model with respect to its pairwise independence graph. Specifically, let \( \mathcal{V} \) be a finite set of vertices, and let \( U \) be a random vector indexed by \( \mathcal{V} \) following a multivariate Gaussian distribution \( P_G \). 
Given two vertices \( v_i \) and \( v_j \) in \( \mathcal{V} \), and a collection of vertices \( \widetilde{\mathcal{V}} \), one might intuitively expect that \( U(v_i) \) and \( U(v_j) \) are conditionally dependent given \( \{U(v) : v \in \widetilde{\mathcal{V}}\} \) if there is a path connecting \( v_i \) and \( v_j \) that does not intersect \( \widetilde{\mathcal{V}} \) in the pairwise independence graph \( (\mathcal{V}, \mathcal{E}_{P_G}) \). However, this intuition does not always hold true, which motivates the definition of faithfulness, which we provide below.

\begin{definition}
	Let $\mathcal{V}$ be a finite collection of vertices, let $P$ be a probability distribution on $\mathcal{V}$ and let $U$ be a random vector with distribution $P$. We say that $P$ is faithful to its pairwise independence graph $(\mathcal{V}, \mathcal{E}(P))$ if, given $v_i,v_j\in\mathcal{V}$ and $S\subset\mathcal{V}$,
	$U(v_i)$ is conditionally independent of $U(v_j)$ given $\{U(s):s\in S\}$  if, and only if, $v_i$ and $v_j$ are separated by $S$ in $(\mathcal{V}, \mathcal{E}(P))$, that is, if, and only if, every path from $v_i$ to $v_j$ in  $(\mathcal{V}, \mathcal{E}(P))$ intersects $S$.
\end{definition}

Another important property related to faithfulness is MTP$_2$, defined as follows.

\begin{definition}\label{def:mtp2}
	Let $U$ be a Gaussian random vector indexed by a finite set $S\neq\emptyset$ with an invertible covariance matrix $\Sigma$ and let $Q = \Sigma^{-1} = [Q_{ts}]_{t,s\in S}$ be its precision matrix. The distribution of $U$ is  MTP$_2$ if for all $s\in S$, $Q_{ss} > 0$, and if for every pair $s,t\in S$, $Q_{st} \leq 0.$
\end{definition}

We refer the reader to \cite{lauritzen1996graphical,fallat2017total} for further details. It should be noted that the definition above sometimes is stated as a theorem characterizing Gaussian MTP$_2$ distributions in terms of a more general definition of the MTP$_2$ property. We will, however, not need that generality here.
The following result is a direct consequence of \cite[Proposition G.1]{slawski2015estimation} (see, also, \cite[Theorem~6.1]{fallat2017total} and \cite[Proposition 3.1]{lauritzen1996graphical}).

\begin{theorem}\label{thm:faithfulmtp2}
Let $U$ be a Gaussian random vector indexed by a finite set $S\neq\emptyset$ with an invertible covariance matrix $\Sigma$ and let $P$ be the distribution of $U$. If $P$ is MTP$_2$, then $P$ is faithful to its pairwise independence graph $(S, \mathcal{E}(S))$.
\end{theorem}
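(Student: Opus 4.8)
The plan is to prove the two directions of faithfulness together, noting that only one of them actually uses the MTP$_2$ hypothesis. Write $Q=[Q_{st}]$ for the precision matrix, and recall that the pairwise independence graph $(S,\mathcal{E}(S))$ has an edge between $s$ and $t$ precisely when $Q_{st}\neq 0$; under MTP$_2$ this means $Q_{st}<0$. Fix $v_i,v_j\in S$ and $S'\subset S$; the cases $v_i\in S'$ or $v_j\in S'$ are trivial, since conditioning on $\{U(s):s\in S'\}$ then makes one of $U(v_i),U(v_j)$ degenerate and simultaneously forces separation, so assume $v_i,v_j\notin S'$ and set $A=S\setminus S'$, which is then nonempty. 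The first step is the standard Gaussian conditioning identity: the conditional law of $U(A)$ given $\{U(s):s\in S'\}$ is Gaussian with precision matrix $Q_{AA}$, the principal submatrix of $Q$ indexed by $A$, hence with conditional covariance matrix $(Q_{AA})^{-1}$. Since $U(v_i)$ and $U(v_j)$ are two coordinates of this conditional Gaussian vector, they are conditionally independent given $\{U(s):s\in S'\}$ if and only if $[(Q_{AA})^{-1}]_{v_iv_j}=0$.

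The second step exploits the algebraic structure imposed by MTP$_2$. By Definition~\ref{def:mtp2}, $Q$ is symmetric, positive definite, with positive diagonal and nonpositive off-diagonal entries, i.e.\ a Stieltjes matrix (a symmetric positive definite $M$-matrix). Every principal submatrix of a Stieltjes matrix is again a Stieltjes matrix, so $Q_{AA}$ is one too and in particular invertible. Write $Q_{AA}=D-N$ with $D$ the positive diagonal part and $N\geq 0$ the negated off-diagonal part, so that $N_{st}=-Q_{st}>0$ exactly on those edges of $\mathcal{E}(S)$ with both endpoints in $A$. Because $Q_{AA}$ is a nonsingular $M$-matrix, the spectral radius of $D^{-1}N$ is strictly below $1$, so the Neumann series
\begin{equation*}
(Q_{AA})^{-1}=\Big(\sum_{k\geq 0}(D^{-1}N)^{k}\Big)D^{-1}
\end{equation*}
converges, and every summand is entrywise nonnegative. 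The $(v_i,v_j)$ entry of $(D^{-1}N)^{k}$ is a sum of products of strictly positive numbers, one summand for each walk of length $k$ from $v_i$ to $v_j$ using only edges of $\mathcal{E}(S)$ that lie inside $A$. Hence $[(Q_{AA})^{-1}]_{v_iv_j}>0$ if and only if some such walk exists, i.e.\ if and only if $v_i$ and $v_j$ lie in the same connected component of the subgraph of $(S,\mathcal{E}(S))$ induced on $A=S\setminus S'$.

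The last step is to read this off as a separation statement: $v_i$ and $v_j$ lie in the same component of the subgraph induced on $S\setminus S'$ exactly when there is a path from $v_i$ to $v_j$ in $(S,\mathcal{E}(S))$ avoiding $S'$, i.e.\ when $S'$ does \emph{not} separate $v_i$ and $v_j$. Combining with the equivalence from the first step gives $U(v_i)\perp U(v_j)\mid\{U(s):s\in S'\}\iff [(Q_{AA})^{-1}]_{v_iv_j}=0\iff v_i,v_j$ are separated by $S'$, which is precisely faithfulness; note that the ``easy'' global Markov direction (separation $\Rightarrow$ conditional independence) also falls out of this computation and does not require MTP$_2$. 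I expect the main obstacle to be the bookkeeping in the middle step: one must verify carefully that $Q_{AA}$ is still an invertible $M$-matrix so the Neumann series genuinely converges, and that the nonzero pattern of its off-diagonal part matches the edge set of the induced subgraph, so that a positive matrix entry can legitimately be identified with graph connectivity. Once this is in place the rest is routine. Alternatively, the entire argument can be replaced by a direct appeal to \cite[Proposition G.1]{slawski2015estimation} (see also \cite[Theorem~6.1]{fallat2017total}), which packages exactly this fact about MTP$_2$ Gaussian distributions.
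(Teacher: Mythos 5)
Your argument is correct, but it takes a different route from the paper: the paper does not prove Theorem~\ref{thm:faithfulmtp2} at all, it simply records it as a direct consequence of \cite[Proposition G.1]{slawski2015estimation} (see also \cite[Theorem~6.1]{fallat2017total}), whereas you reprove that cited fact from scratch. Your proof is essentially the standard walk-sum argument behind those references: reduce conditional independence given $U(S')$ to vanishing of $[(Q_{AA})^{-1}]_{v_iv_j}$ via the Schur/precision-submatrix identity, observe that MTP$_2$ makes $Q$ (and hence every principal submatrix $Q_{AA}$) a Stieltjes matrix, expand $(Q_{AA})^{-1}$ through the convergent Jacobi--Neumann series with nonnegative terms, and identify strict positivity of the $(v_i,v_j)$ entry with the existence of a walk from $v_i$ to $v_j$ inside $A=S\setminus S'$ along edges of the pairwise independence graph, i.e.\ with non-separation. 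All the ingredients you flag as potential obstacles are fine: $Q_{AA}$ is symmetric positive definite with nonpositive off-diagonal entries, so it is a nonsingular $M$-matrix and $\rho(D^{-1}N)<1$ by the regular-splitting theorem; the nonzero pattern of its off-diagonal part is exactly the induced edge set because it is literally a submatrix of $Q$; and no cancellation can occur in the series since every term is entrywise nonnegative. Your handling of the degenerate cases $v_i\in S'$ or $v_j\in S'$ is also consistent with the separation definition used in the paper. What the two routes buy: the paper's citation is shorter and defers to the literature, while your self-contained argument makes transparent the quantitative mechanism (under MTP$_2$, non-separation forces a strictly positive partial covariance, not merely a nonzero one), which is precisely the structural fact the paper later exploits when showing $Q_{ts}<0$ for neighboring points in the proof of Theorem~\ref{thm:explicit_link}.
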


\subsection{GMRFs on metric graphs}
In this subsection, we use the definitions and notations from \cite{BSW_Markov}, which we refer to for a comprehensive treatment on the subject.

Let us begin by establishing some notation and basic definitions that will be used throughout this paper. Since \(\Gamma\) is a metric space when equipped with the geodesic metric \(d(\cdot, \cdot)\), we will work with its associated metric topology.
For any subset \( S \subset \Gamma \), we let \( \partial S \) denote the topological boundary of \( S \), and \( \overline{S} \) denote the closure of \( S \). Additionally, given any $\varepsilon > 0$, the \(\varepsilon\)-neighborhood of \( S \) is defined as 
\[ S_\varepsilon = \{s \in \Gamma : \exists z \in S \text{ such that } d(s, z) < \varepsilon\},\] 
where \( d(s, z) \) is the geodesic distance on \(\Gamma\).

Let \((\Omega, \mathcal{F}, \mathbb{P})\) be a complete probability space, where \(\Omega\) is the sample space, \(\mathcal{F}\) is the \(\sigma\)-algebra of events, and \(\mathbb{P}\) is the probability measure. The expectation of a real-valued random variable \( Z \) is denoted by 
\[ \mathbb{E}(Z) = \int_\Omega Z(\omega) d\mathbb{P}(\omega). \]

For sub-\(\sigma\)-algebras \(\mathcal{A}\), \(\mathcal{B}\), and \(\mathcal{C}\) of \(\mathcal{F}\), we say that \(\mathcal{A}\) and \(\mathcal{B}\) are conditionally independent given \(\mathcal{C}\) if, for every \(A \in \mathcal{A} \) and \( B \in \mathcal{B} \), we have that 
\({
\mathbb{P}(A \cap B | \mathcal{C}) = \mathbb{P}(A | \mathcal{C}) \mathbb{P}(B | \mathcal{C}),
}\)
where \(\mathbb{P}(F | \mathcal{C}) = \mathbb{E}(1_F | \mathcal{C})\) is the conditional probability of a set \( F \in \mathcal{F} \) given \(\mathcal{C}\). In this case, we say that \(\mathcal{C}\) splits \(\mathcal{A}\) and \(\mathcal{B}\).

Now, for a stochastic process \( u \) defined on the compact metric graph \(\Gamma\), we introduce the \(\sigma\)-algebras 
\[
\mathcal{F}^u(S) = \sigma(u(s) : s \in S),\quad\text{and}\quad
\mathcal{F}^u_+(S) = \bigcap_{\varepsilon > 0} \mathcal{F}^u(S_\varepsilon),
\] 
where $\mathcal{F}^u(S)$ represents the smallest \(\sigma\)-algebra containing all information about the process \( u \) on the set \( S \) and $\mathcal{F}^u_+(S)$ captures the behavior of \( u \) around an ``infinitesimal'' neighborhood of \( S \).
With these notations and definitions in place, we can now define Markov random fields (MRFs) on compact metric graphs, as introduced in \cite{BSW_Markov}, which builds upon the framework developed in \cite{kunsch}.

\begin{definition}\label{def:MarkovPropertyField}
	A random field $u$ on a compact metric graph $\Gamma$ is an MRF (of any possible order) if for every open set $S$, $\mathcal{F}^u_+(\partial S)$ splits $\mathcal{F}^u_+(\overline{S})$ and $\mathcal{F}^u_+(\Gamma \setminus S)$.
\end{definition} 

To apply this definition effectively, it is crucial to characterize the splitting $\sigma$-algebra $\mathcal{F}^u_+(\partial S)$, as it contains the information required to ensure the independence of the field on $\bar{S}$ from the field on $\Gamma \setminus S$. This characterization becomes especially important for differentiable random fields on $\Gamma$, which we will now define. 

To this end, we first need to introduce Gaussian spaces. Let $L_2(\Omega)$ be the Hilbert space of square-integrable real-valued random variables. Given a GRF $u$ on $\Gamma$ and a Borel set $S \subset \Gamma$, the Gaussian space is defined as
\[
H(S) = \overline{\text{span}(u(s) : s \in S)},
\]
where the closure is taken with respect to the $L_2(\Omega)$-norm. Since each element in ${\text{span}(u(s) : s \in S)}$ is Gaussian, the space $H(S)$ consists of $L_2(\Omega)$-limits of these Gaussian variables. Therefore, $H(S)$ contains only Gaussian random variables \cite[e.g., see][Theorem 1.4.2, p.39]{ashtopics}.

Let \( u \) be a Gaussian random field with associated Gaussian space \( H(\Gamma) \), \( e \) be an edge, and \( v: e \to H(\Gamma) \) be a function. We say \( v \) is weakly differentiable at \( s = (t,e) \in \Gamma \) in the \( L_2(\Omega) \) sense if there exists an element \( v'(s) \in H(\Gamma) \), that will be referred to as a weak derivative, such that for every \( w \in H(\Gamma) \) and any sequence \( s_n \to s \) with \( s_n \neq s \), we have
\[
\mathbb{E}[w(v(s_n) - v(s))/(s_n - s)] \to \mathbb{E}[wv'(s)].
\]
Higher-order weak derivatives are defined inductively: for \( k \geq 2 \), \( v \) has a \( k \)-th order weak derivative at \( s = (t,e) \in \Gamma \) if \( v_e^{(k-1)}(\tilde{t}) \) exists for all \( \tilde{t} \in [0, \ell_e] \) and \( v_e^{(k-1)} \) is weakly differentiable at \( t \). Finally, a function \( v: e \to H(\Gamma) \) is weakly continuous in the \( L_2(\Omega) \) sense if for every \( w \in H(\Gamma) \), the function \( s \mapsto \mathbb{E}[w v(s)] \) is continuous.

\begin{definition}\label{def:RandomFieldOrderP}
	Let $u$ be a GRF on $\Gamma$ that has weak derivatives, in the 
	$L_2(\Omega)$ sense, of orders $1,\ldots, p$ for $p\in\mathbb{N}$.
	Also assume that $u_e^{(j)}$, for $j=0,\ldots,p$ and $e\in\mathcal{E}$, is weakly continuous in the 
	$L_2(\Omega)$ sense.
	Then the weak derivatives are well-defined for each $s\in\Gamma$ and we say that $u$ is a differentiable Gaussian 
	random field of order $p$. For convenience, a GRF that is continuous in $L_2(\Omega)$ is said to be a differentiable GRF of order $0$.
\end{definition} 

We can now introduce higher-order Markov properties as follows \citep{BSW_Markov}. 

\begin{definition}\label{def:MarkovPropertyFieldOrderP}
	A random field $u$ on a compact metric graph $\Gamma$ is Markov of order $p$ if it has $p-1$ weak derivatives in the mean-squared sense (see Definition~\ref{def:RandomFieldOrderP}) and
	$\sigma(u(s), u'(s),\ldots, u^{(p-1)}(s): s\in\partial S)$ splits $\mathcal{F}_+^u(\overline{S})$ and $\mathcal{F}_+^u(\Gamma \setminus S)$ for every open set $S$.
\end{definition}

Note that every MRF of order $p$ is also an MRF according to Definition~\ref{def:MarkovPropertyField}. 

Finally, let us recall the definition of the Cameron--Martin spaces, which are key results in order to study the Markov property of Gaussian random fields. For a GRF $u$ with a continuous covariance function ${\rho:\Gamma\times\Gamma\to\mathbb{R}}$, we define the Cameron--Martin space $\mathcal{H}(S)$, for $S\subset\Gamma$, as
$$
\mathcal{H}(S) = \{h(s) = \mathbb{E}(u(s)v): s\in \Gamma\hbox{ and } v\in H(S)\},
$$
with inner product $\langle h_1, h_2\rangle_{\mathcal{H}} = \mathbb{E}(v_1 v_2),$
where $h_j(s) = \mathbb{E}(u(s)v_j)$, $s\in\Gamma$, and $v_j \in H(S)$, where $j=1,2$.  
Furthermore, it is well-known that $\mathcal{H}(S)$ is isometrically isomorphic to $H(S)$, e.g., \cite[Theorem 8.15]{janson_gaussian}. For the following definition, recall that the support of a function $h: \Gamma \to \mathbb{R}$ is given by $\text{supp}\, h = \overline{\{s \in \Gamma : h(s) \neq 0\}}$.
\begin{definition}\label{def:localCMspaces}
	Let $\Gamma$ be a compact metric graph and let $H$ be an inner product space of functions defined on $\Gamma$, endowed with the inner product
	$\langle \cdot, \cdot\rangle_{H}$. We say that $H$ is local if:
	\begin{enumerate}[i]
		\item If $h_1,h_2\in H$ and $\textrm{supp}\, h_1 \cap \textrm{supp}\, h_2 = \emptyset$, then $\langle h_1,h_2\rangle_{H} = 0;$\label{def:localCMspaces1}
		\item If $h \in H$ is such that $h = h_1+h_2$, where $\textrm{supp}\, h_1\cap \textrm{supp}\,h_2 = \emptyset$, then $h_1, h_2 \in H$.\label{def:localCMspaces2}
	\end{enumerate}
\end{definition}

With this definition, we have the following important characterization.

\begin{theorem}[Theorem 1, \cite{BSW2022}]\label{thm:MarkovLocalCM}
	Let $u$ be a GRF defined on a compact metric graph $\Gamma$, with Cameron--Martin space $\mathcal{H}(\Gamma)$. Then, $u$ is a GMRF if, and only if, $\mathcal{H}(\Gamma)$ is local.
\end{theorem}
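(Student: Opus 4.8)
The plan is to transport the statement into the Hilbert space $\mathcal{H}(\Gamma)$ via the isometric isomorphism $\iota\colon H(\Gamma)\to\mathcal{H}(\Gamma)$, $\iota(v)=\bigl(s\mapsto\mathbb{E}(u(s)v)\bigr)$, for which $\iota(H(S))=\mathcal{H}(S)$ by definition and hence (an isometry commutes with intersections and orthogonal complements of closed subspaces) $\iota\bigl(\bigcap_{\varepsilon>0}H(S_\varepsilon)\bigr)=\mathcal{H}_+(S):=\bigcap_{\varepsilon>0}\mathcal{H}(S_\varepsilon)$. Two standard inputs are then invoked: for a jointly Gaussian family, conditional independence of $\mathcal{F}^u(A)$ and $\mathcal{F}^u(B)$ given a Gaussian $\sigma$-algebra $\mathcal{F}^u(C)$ is equivalent to orthogonality in $L_2(\Omega)$ of the images of $H(A)$ and $H(B)$ under $P_{H(C)^\perp}$; and, for Gaussian families, the $\sigma$-algebra generated by a decreasing intersection of closed Gaussian subspaces agrees modulo null sets with the intersection of the generated $\sigma$-algebras, so that $\mathcal{F}^u_+(S)$ is generated by $\bigcap_{\varepsilon>0}H(S_\varepsilon)$. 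Combining these and applying $\iota$, $u$ is a GMRF if and only if, for every open $S\subseteq\Gamma$, setting $K_0=\mathcal{H}_+(\partial S)$, $K_1=\mathcal{H}_+(\overline S)$, $K_2=\mathcal{H}_+(\Gamma\setminus S)$ (so $K_0\subseteq K_1\cap K_2$), one has $K_1\ominus K_0\perp K_2\ominus K_0$. Finally, the reproducing identity $\langle h,\rho(\cdot,s)\rangle_{\mathcal H}=h(s)$ gives $\mathcal{H}(A)^\perp=\{h\in\mathcal{H}(\Gamma):h|_A\equiv 0\}$ for any $A\subseteq\Gamma$; in particular a function vanishing on a neighbourhood of $A$ is orthogonal to $\mathcal{H}_+(A)$, $\mathrm{span}\{\rho(\cdot,s):s\in\Gamma\}$ is dense in $\mathcal H(\Gamma)$, and, $\rho$ being continuous, every element of $\mathcal H(\Gamma)$ is continuous.

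For ``GMRF $\Rightarrow$ local'', I would first note that $K_0$, $K_1\ominus K_0$, $K_2\ominus K_0$ are mutually orthogonal closed subspaces whose sum is closed and contains the dense set $\mathrm{span}\{\rho(\cdot,s):s\in\overline S\cup(\Gamma\setminus S)\}=\mathrm{span}\{\rho(\cdot,s):s\in\Gamma\}$, so the Markov orthogonality self-improves to $\mathcal{H}(\Gamma)=K_0\oplus(K_1\ominus K_0)\oplus(K_2\ominus K_0)$ for each open $S$. Then, given $h_1,h_2\in\mathcal H(\Gamma)$ with disjoint supports, use normality of the compact space $\Gamma$ to choose an open $S$ with $\mathrm{supp}\,h_1\subseteq S$ and $\overline S\cap\mathrm{supp}\,h_2=\emptyset$: since $h_1$ then vanishes on neighbourhoods of $\partial S$ and of $\Gamma\setminus S$ while $h_2$ vanishes on a neighbourhood of $\overline S$, the facts above give $h_1\in K_1\ominus K_0$ and $h_2\perp K_1$, whence $\langle h_1,h_2\rangle_{\mathcal H}=0$, i.e.\ Definition~\ref{def:localCMspaces}(i). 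For Definition~\ref{def:localCMspaces}(ii), given $h=h_1+h_2\in\mathcal H(\Gamma)$ with disjoint supports, pick $S$ the same way; as $h$ vanishes near $\partial S$ one gets $h=k_1+k_2$ with $k_1\in K_1\ominus K_0$, $k_2\in K_2\ominus K_0$, both in $\mathcal H(\Gamma)$. Since $\mathcal H(\Gamma\setminus\overline S)\subseteq\mathcal H(\Gamma\setminus S)\subseteq K_2=K_0\oplus(K_2\ominus K_0)\perp k_1$, the continuous function $k_1$ vanishes on $\Gamma\setminus\overline S$, and a short local argument at the points of $\partial S$ (using that $h$ vanishes near $\partial S$ and $k_2$ vanishes on $S$, together with continuity) upgrades this to $\mathrm{supp}\,k_1\subseteq S$, and symmetrically $\mathrm{supp}\,k_2\subseteq\Gamma\setminus\overline S$; matching $k_1+k_2=h_1+h_2$ on disjoint pieces forces $k_1=h_1$, so $h_1,h_2\in\mathcal H(\Gamma)$.

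For ``local $\Rightarrow$ GMRF'', fix an open $S$ and set $\mathcal N=\{g\in\mathcal H(\Gamma):g\text{ vanishes on some neighbourhood of }\partial S\}$, so the reproducing facts give $K_0^\perp=\overline{\mathcal N}$. Any $g\in\mathcal N$ satisfies $g=g\mathbf 1_S+g\mathbf 1_{\Gamma\setminus\overline S}$ with positively separated (hence disjoint) supports, so by locality both summands lie in $\mathcal H(\Gamma)$ and are orthogonal; writing $\mathcal N_1,\mathcal N_2$ for the two support classes gives $\mathcal N=\mathcal N_1\oplus\mathcal N_2$ and $\overline{\mathcal N}=\overline{\mathcal N_1}\oplus\overline{\mathcal N_2}$. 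I would then identify $\overline{\mathcal N_1}=K_1\ominus K_0$: the inclusion ``$\subseteq$'' holds because $\mathcal N_1\subseteq\mathcal H(S)\subseteq K_1$ (the first inclusion from locality and $\mathcal H(S)^\perp=\{h|_S\equiv 0\}$) and $\mathcal N_1\subseteq\mathcal N\subseteq K_0^\perp$, while ``$\supseteq$'' follows since any $f\in K_1\ominus K_0\subseteq\overline{\mathcal N}$ splits as $f_1+f_2$ with $f_2=f-f_1\in K_1$ but also $f_2\in\overline{\mathcal N_2}\perp K_1$ (elements of $\mathcal N_2$ vanishing on a neighbourhood of $\overline S$), forcing $f_2=0$. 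Symmetrically $\overline{\mathcal N_2}=K_2\ominus K_0$, and since $\mathcal N_1\perp\mathcal N_2$ we obtain $K_1\ominus K_0\perp K_2\ominus K_0$, i.e.\ $u$ is Markov.

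The step I expect to be the genuine obstacle is the measure-theoretic bookkeeping around the germ objects: establishing that $\mathcal{F}^u_+(S)$ really is the $\sigma$-algebra generated by $\bigcap_{\varepsilon>0}H(S_\varepsilon)$ (intersections of $\sigma$-algebras and of their generating subspaces do not commute without the Gaussian hypothesis), and making precise the support-level descriptions of $\mathcal{H}_+(\overline S)$, $\mathcal{H}_+(\partial S)$ and their orthogonal complements near $\partial S$ — it is exactly here that locality is both consumed (in the converse) and produced (in the forward direction). Everything else is Hilbert-space geometry and point-set topology on $\Gamma$.
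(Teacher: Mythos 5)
The paper itself does not prove Theorem~\ref{thm:MarkovLocalCM}: it is imported as Theorem~1 of \cite{BSW2022}, so there is no in-paper argument to compare against line by line. Judged on its own, your proof is correct and follows the classical Gaussian germ-field route (Rozanov--Pitt--K\"unsch) on which the cited source also relies: reduce the splitting condition to orthogonality of $K_1\ominus K_0$ and $K_2\ominus K_0$ via the Gaussian splitting criterion together with the identification $\mathcal{F}^u_+(A)=\sigma\bigl(\bigcap_{\varepsilon>0}H(A_\varepsilon)\bigr)$, transport everything to $\mathcal{H}(\Gamma)$ by the isometry, and exploit the reproducing identity $\langle h,\rho(\cdot,s)\rangle_{\mathcal{H}}=h(s)$, which gives $\mathcal{H}(A)^\perp=\{h\in\mathcal{H}(\Gamma):h|_A\equiv 0\}$. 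The three-way decomposition $\mathcal{H}(\Gamma)=K_0\oplus(K_1\ominus K_0)\oplus(K_2\ominus K_0)$ you establish in the forward direction is precisely the content of Proposition~\ref{prp:charmarkovspaces}, which the paper quotes from \cite{BSW_Markov}, so your argument is consistent with the machinery the authors use elsewhere. Your two flagged ``standard inputs'' are genuinely needed but are indeed standard for Gaussian fields: the conditional-independence/orthogonality criterion is classical, and the germ-$\sigma$-algebra identification is exactly the Mandrekar-type statement the present paper itself invokes via \cite[Lemma 3.3]{Mandrekar1976} in the proof of Lemma~\ref{lem:condpoints_markov}; citing it (or reproving it by second quantization of the projections onto $H(A_\varepsilon)$) closes the only gap you acknowledge. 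Two small remarks: in the forward direction the ``short local argument at the points of $\partial S$'' is unnecessary, since $\rho(\cdot,s)\in\mathcal{H}(\Gamma\setminus S)\subseteq K_2$ for every $s\in\Gamma\setminus S$, including $s\in\partial S$, so $k_1$ vanishes on all of $\Gamma\setminus S$ and the pointwise identification $k_1=h_1$, $k_2=h_2$ is immediate; and your separation steps tacitly use that disjoint closed supports in the compact graph lie at positive geodesic distance and that any path from $S$ to $\Gamma\setminus\overline{S}$ meets $\partial S$ --- both true, but worth stating explicitly since the neighbourhoods $S_\varepsilon$ entering $\mathcal{F}^u_+$ and $\mathcal{H}_+$ are defined through the geodesic metric.
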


\section{Explicit link with graphical models}\label{sec:isotropic}

Let $\Gamma$ be a compact and connected metric graph with an arbitrary metric $d(\cdot,\cdot)$, and let $\mathcal{P} \subset\Gamma$ be a finite collection of points such that $\mathcal{P}\supset \mathcal{V}$. Recall that we define $\Gamma_{\mathcal{P}}$ as the combinatorial graph $(\mathcal{P}, \mathcal{E}_{\mathcal{P}})$, where $\mathcal{E}_{\mathcal{P}}$ is the set of edges induced by $\mathcal{P}$ on $\mathcal{E}$, after all points of $\mathcal{P}$ that were not vertices were turned into vertices. More precisely, $\mathcal{E}_{\mathcal{P}}$ contains an edge $e\in\mathcal{E}$ if $(e\cap \mathcal{P})\setminus \mathcal{V} = \emptyset$, and contains subintervals from edges of $\mathcal{E}$ otherwise, where the subintervals are the splits induced by the points of $\mathcal{P}$ (treating these points as vertices of degree 2). 
We introduce the notation 
$$
v_i \stackrel{\mathcal{E}_{\mathcal{P}}}{\sim} v_j
$$ 
for the case when $v_i,v_j\in\mathcal{P}$ are neighbors in $\Gamma_{\mathcal{P}}$, i.e., if $\mathcal{E}_{\mathcal{P}}$ contains an edge from $v_i$ to $v_j$, and we write $X\perp Y | Z$ if the random variables $X$ and $Y$ are conditionally independent given the (set of) random variables $Z$. The following definitions are also important:

\begin{definition}\label{def:admissible_points}
	We say that a finite collection of points $\mathcal{P}$ as above is admissible for $\Gamma$ if $\Gamma_{\mathcal{P}}$ does not contain multiple edges, i.e., there is at most one edge between any pair of vertices in $\Gamma_{\mathcal{P}}$.
\end{definition}

\begin{definition}\label{def:paths}
	Let $\Gamma$ be a metric graph and $s_1,s_2\in\Gamma$. We say that $p$ is a path on $\Gamma$ connecting $s_1$ and $s_2$ if there exist some natural number $n\in\mathbb{N}$, and edges ${e_1,\ldots,e_n \in\mathcal{E}}$, with the identification $e_j = [0,l_{e_j}]$, $j=1,\ldots,n$, such that 
	\begin{enumerate}[i]
	\item ${p = f_1 \cup e_2\cup\cdots\cup e_{n-1} \cup f_n}$, where $f_1 = [0,s_1]$ or $f_1=[s_1,l_{e_1}]$, and ${f_n = [0,s_2]}$ or ${f_n = [s_2,l_{e_n}]}$, 
	\item for $j=2,\ldots,n$, we have either $(0,e_j) = (1,e_{j-1})$, $(0,e_j) = (0,e_{j-1})$, $(1,e_j) = (0,e_{j-1})$ or $(1,e_j) = (1,e_{j-1})$. 
	\end{enumerate}
	Further, we say that a path $p$ is simple if each vertex in $p$ belongs to at most two edges, and $p$ does not contain loops. 
\end{definition}

We will also need the following regularity condition on the metric of the metric graph:

\begin{definition}\label{def:intermediatevalue}
Let $\Gamma$ be a compact metric graph with a metric $d(\cdot,\cdot)$. 
\begin{enumerate}[i]
	\item If for every $e\in\mathcal{E}$, and every $t, t_1, t_2\in e$ and $\tau>0$ such that $d(t,t_1) > \tau$ and $d(t,t_2)<\tau$, a $t^*\in e$ exists such that $d(t,t^*) = \tau$, we say that $d(\cdot,\cdot)$ satisfies the intermediate value property. 
	\item Let $d(\cdot,\cdot)$ be a metric that satisfies the intermediate value property on $\Gamma$. Given $t,s\in\Gamma$, any simple path $P_{t,s}$ (i.e., the path does not have self intersections) between $t$ and $s$, and any $0 < \tau < d(t,s)$, consider the ordered sequence of points $p_k\in P_{t,s}$, $k\in\mathbb{N}$, defined as 
	\begin{itemize}
	\item $p_0 = s$, and $p_1 \in P_{t,s}$ is such that $d(p_1,s) = \tau$; 
	\item $p_2 \in p_{t, p_1} \subset P_{t,s}$ is such that $d(p_1,p_2) = \tau$; and 
	\item for any $k>1$, $p_k \in P_{t, p_{k-1}}\subset P_{t, p_{k-2}}$ is such that $d(p_k, p_{k-1}) = \tau$. 
	\end{itemize}
	We say that $d(\cdot,\cdot)$ is regular if for any $t,s\in \Gamma$ and any $0 < \tau < d(t,s)$, this sequence is finite, and $d(t, p_{N_{t,s}}) < \tau$, where $N_{t,s}$ is the last term of this sequence.
	\item We say that $d(\cdot,\cdot)$ is connected on $\Gamma$ if all simple paths on $\Gamma$ are connected sets on $(\Gamma,d)$.
\end{enumerate}
\end{definition}

\begin{remark}
	The geodesic distance and resistance distance satisfy the intermediate value property, are regular and are connected in the sense of Definition \ref{def:intermediatevalue}.
\end{remark}

We are now in a position to state our main theorem. 

\begin{theorem}\label{thm:explicit_link}
	Let $\Gamma$ be a compact and connected metric graph endowed with a metric $d(\cdot,\cdot)$ that satisfies the intermediate value property, is regular and connected in the sense of Definition \ref{def:intermediatevalue}. Let, also, $u(\cdot)$ be a GMRF of order 1 with a continuous strictly positive definite covariance function $r$. Let $\mathcal{P}$ be an admissible set of points for $\Gamma$. Then, the distribution of $\{u(s): s\in \mathcal{P}\}$ is MTP$_2$.
\end{theorem}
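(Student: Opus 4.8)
The plan is to prove directly that the precision matrix $Q=\Sigma^{-1}$ of the Gaussian vector $U=\{u(s):s\in\mathcal P\}$ (with $\Sigma$ invertible since $r$ is strictly positive definite and the points of $\mathcal P$ are distinct) satisfies the two conditions in Definition~\ref{def:mtp2}: $Q_{v_iv_i}>0$ for every $i$, which is automatic since $Q$ is positive definite, and $Q_{v_iv_j}\le 0$ for every $i\neq j$. For a jointly Gaussian vector the latter is equivalent to the partial correlation of $u(v_i)$ and $u(v_j)$ given $\{u(s):s\in\mathcal P\setminus\{v_i,v_j\}\}$ being non-negative, and this partial correlation has the same sign as the conditional covariance $\mathrm{Cov}\big(u(v_i),u(v_j)\mid u(s),\,s\in\mathcal P\setminus\{v_i,v_j\}\big)$. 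So everything reduces to showing these conditional covariances are non-negative.

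The first step is to shrink the conditioning set using the order-$1$ Markov property (Definition~\ref{def:MarkovPropertyFieldOrderP}), which under the regularity assumptions on $d$ may be applied with respect to the natural open sets of the metric graph. Fix $v_i,v_j\in\mathcal P$ and let $S$ be the open set consisting of $v_i$, $v_j$ and the interiors of all sub-edges of $\Gamma_{\mathcal P}$ incident to $v_i$ or to $v_j$. Since $\mathcal P$ is admissible, no sub-edge other than a possible single one between $v_i$ and $v_j$ joins these two points, and a short computation gives $\partial S=B$, where $B=\big(N(v_i)\cup N(v_j)\big)\setminus\{v_i,v_j\}\subset\mathcal P\setminus\{v_i,v_j\}$ and $N(\cdot)$ denotes the neighbourhood in $\Gamma_{\mathcal P}$. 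As $\sigma(u(s):s\in\partial S)=\sigma(u(s):s\in B)$ splits $\mathcal F^u_+(\overline S)$, which contains $u(v_i),u(v_j)$, and $\mathcal F^u_+(\Gamma\setminus S)$, which contains $u(s)$ for all $s\in\mathcal P\setminus\{v_i,v_j\}$ because no point of $\mathcal P$ lies in a sub-edge interior, standard properties of conditional independence give $(u(v_i),u(v_j))\perp \{u(s):s\in\mathcal P\setminus(\{v_i,v_j\}\cup B)\}\mid u_B$; hence the conditional law of $(u(v_i),u(v_j))$ given all other coordinates coincides with that given $u_B$. If $v_i$ and $v_j$ are not neighbours in $\Gamma_{\mathcal P}$, the same argument applied to the star of $v_i$ alone yields $u(v_i)\perp u(v_j)\mid u_B$ and therefore $Q_{v_iv_j}=0$. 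It remains to handle an adjacent pair $v_i\stackrel{\mathcal E_{\mathcal P}}{\sim}v_j$, joined by a sub-edge $e$, for which one must prove $\mathrm{Cov}(u(v_i),u(v_j)\mid u_B)\ge 0$.

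This is the heart of the argument and is one-dimensional in nature. Applying the order-$1$ Markov property to open sets of the form ``a portion of $e$ together with the star of $v_i$ (resp.\ $v_j$)'', whose boundary reduces, after conditioning on $u_B$, to a single point of $e$, shows that conditionally on $u_B$ the restriction $u|_{\overline e}$ is a one-dimensional Gaussian Markov process along $e$. For such a process the conditional covariance telescopes along a finite ordered chain: picking points $v_i=p_0,p_1,\dots,p_{N},p_{N+1}=v_j$ of $e$ with $d(p_k,p_{k+1})=\tau$ for $k<N$ and $d(p_N,v_j)<\tau$, which exist and form a finite chain precisely because $d$ satisfies the intermediate value property and is regular and connected on $\Gamma$ in the sense of Definition~\ref{def:intermediatevalue} (so that the pieces of $e$ are connected sets and the Markov property applies to them), one obtains
\[
\mathrm{Cov}(u(v_i),u(v_j)\mid u_B)=\frac{\prod_{k=0}^{N}\mathrm{Cov}(u(p_k),u(p_{k+1})\mid u_B)}{\prod_{k=1}^{N}\mathrm{Var}(u(p_k)\mid u_B)}.
\]
Every denominator factor is strictly positive because $r$ is strictly positive definite and no $p_k$ lies in $B$ (no point of $\mathcal P$ is interior to $e$), and $s\mapsto\mathrm{Var}(u(s)\mid u_B)$ is continuous, hence bounded below by a positive constant on the compact edge $e$. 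Since $r$ is continuous, $\mathrm{Cov}(u(p_k),u(p_{k+1})\mid u_B)\to\mathrm{Var}(u(p_k)\mid u_B)>0$ as $\tau\to0$, uniformly in $k$, so for $\tau$ small enough every numerator factor is strictly positive. Therefore $\mathrm{Cov}(u(v_i),u(v_j)\mid u_B)>0$ and $Q_{v_iv_j}<0$. Combining the three cases, $Q$ has strictly positive diagonal and non-positive off-diagonal entries, so the distribution of $U$ is MTP$_2$.

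The step I expect to be the main obstacle is making the one-dimensional reduction on $e$ fully rigorous: identifying the splitting $\sigma$-algebra attached to the relevant open sets with the $\sigma$-algebra generated by finitely many point values, verifying that the conditional process $u|_{\overline e}$ given $u_B$ is genuinely Markov along $e$, and — this is exactly the role of the three conditions in Definition~\ref{def:intermediatevalue} — constructing the finite metric chain $(p_k)$ at prescribed distances and controlling the conditional correlations $\mathrm{Cov}(u(p_k),u(p_{k+1})\mid u_B)/\sqrt{\mathrm{Var}(u(p_k)\mid u_B)\,\mathrm{Var}(u(p_{k+1})\mid u_B)}$ as the mesh shrinks in a general regular metric rather than in arc length. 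The degenerate situations (loops in $\Gamma_{\mathcal P}$, or $\Gamma$ a single edge so that $B=\emptyset$) are covered by the same reasoning with the trivial $\sigma$-algebra.
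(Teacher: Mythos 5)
Your overall strategy is the same as the paper's: show that $Q$ has positive diagonal and non-positive off-diagonal entries, kill the entries of non-neighbours by applying the order-$1$ Markov property to a star-shaped open set whose boundary lies in $\mathcal P$, and for an adjacent pair reduce to a one-dimensional statement on the connecting sub-edge, where a multiplicative (Borisov-type) identity for the conditional covariance across a separating point, iterated along a finite chain whose existence is exactly what the regularity of $d$ guarantees, yields strict positivity of the conditional covariance and hence $Q_{ts}<0$. Your final positivity step (a telescoping product over consecutive chain points, each factor positive for small mesh by uniform continuity and the uniform positive lower bound on the conditional variance coming from strict positive definiteness) is a mild variant of the paper's: there, zeros of the conditional covariance are shown to propagate along the chain, giving a contradiction with regularity, and negative values are then excluded by the intermediate value theorem. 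Both variants rest on the same ingredients and yours is fine, modulo the point below.

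The genuine gap is the step you yourself flag: the assertion that, conditionally on $u_B$, the restriction of $u$ to the closed sub-edge is a one-dimensional Gaussian Markov process, i.e.\ that after conditioning the splitting $\sigma$-algebra of the relevant open sets collapses to a single point value. This is not automatic from Definition~\ref{def:MarkovPropertyFieldOrderP}, and it is precisely the content of the paper's Lemma~\ref{lem:condpoints_markov}: the field conditioned on $\{u(p)=0:p\in\mathcal P\setminus\{t,s\}\}$ is again a GMRF of order $1$ and vanishes at the conditioning points, which is proved via the Cameron--Martin locality characterization (Theorem~\ref{thm:MarkovLocalCM}) together with the identification of the germ $\sigma$-algebras $\mathcal F^u_+(\partial O)$ with the point $\sigma$-algebras (Mandrekar's lemma); only then does the boundary $\{t^*\}\cup(\mathcal P\setminus\{t,s\})$ reduce to $\{t^*\}$, with admissibility guaranteeing that $t^*$ truly separates the two sides (no parallel sub-edge between $t$ and $s$). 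Without some such argument your telescoping identity is unjustified, so your proof is incomplete at its central step. That said, your conditioning set $B$ and open sets are well chosen, and a more elementary route than Lemma~\ref{lem:condpoints_markov} is available along your lines: since the order-$1$ definition already takes $\sigma(u(s):s\in\partial O)$ as the splitting $\sigma$-algebra, applying it to the open set consisting of the portion of $e$ beyond $p_k$ together with the open star of $v_j$ (whose boundary is $\{p_k\}\cup\bigl(N(v_j)\setminus\{v_i\}\bigr)$ by admissibility) and then invoking the weak-union property of conditional independence gives $u(a)\perp u(b)\mid\bigl(u(p_k),u_B\bigr)$ for $a,b$ on opposite sides of $p_k$, after which the Gaussian computation of Lemma~\ref{lem:borisov_type} delivers the multiplicative identity for the conditional covariance directly; writing this out would close the gap.
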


The proof of this result is provided in Section~\ref{sec:proof}. 
As an immediate corollary, GMRFs of order 1 on compact metric graphs induce graphical models that are faithful to their pairwise independence graph. This tells us that the Markov structure on metric graphs translates very well to graphical models, providing a sound dependence structure, so that, what we obtain matches the intuition, that communication on the graph means (conditional) dependence. Specifically, we have the following result which is an immediate consequence of Theorems \ref{thm:faithfulmtp2} and \ref{thm:explicit_link}.

\begin{corollary}\label{cor:faithfulness}
	Let $\Gamma$ be a compact metric graph endowed with a metric $d(\cdot,\cdot)$ that satisfies the intermediate value property, is regular and connected in the sense of Definition~\ref{def:intermediatevalue}. Let $u(\cdot)$ be a GMRF of order 1 with a covariance function that is continuous and strictly positive definite. Let $\mathcal{P}$ be an admissible set of points for $\Gamma$. Let $P_u$ be the distribution of $u$. For $t,s\in\mathcal{P}$ we have that 
	$$
	t\stackrel{\mathcal{E}_{\mathcal{P}}}{\sim}s \Longleftrightarrow t\stackrel{\mathcal{E}(\mathcal{P}_u)}{\sim}s ,
	$$
	where $(\mathcal{P},\mathcal{E}(P_u))$ is the pairwise independence graph of $P_u$. Furthermore, we have that $u(\mathcal{P}) := \{u(s): s\in \mathcal{P}\}$ is faithful to its pairwise independence graph, and in particular, we given $t,s\in\mathcal{P}$ and $S\subset \mathcal{P}$,
	$$
	u(t)\perp u(s) | \{u(w): w\in S\} \Longleftrightarrow \hbox{$S$ separates $t$ and $s$ in $\mathcal{E}_{\mathcal{P}}$}.$$
\end{corollary}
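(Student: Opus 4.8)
The plan is to combine the two cited theorems with the order-$1$ Markov property of $u$ to identify the pairwise independence graph $(\mathcal{P},\mathcal{E}(P_u))$ with the combinatorial graph $\Gamma_{\mathcal{P}}$, after which every assertion in the corollary becomes a routine translation. By Theorem~\ref{thm:explicit_link} the law $P_u$ of $u(\mathcal{P})$ is MTP$_2$, so Theorem~\ref{thm:faithfulmtp2} immediately yields that $P_u$ is faithful to $(\mathcal{P},\mathcal{E}(P_u))$; this already gives the ``Furthermore'' clause of the statement. The final displayed equivalence then follows once we establish $\mathcal{E}(P_u)=\mathcal{E}_{\mathcal{P}}$, since faithfulness characterizes the conditional independence $u(t)\perp u(s)\mid\{u(w):w\in S\}$ as separation of $t,s$ by $S$ in $\mathcal{E}(P_u)$, and separation is a property of the edge set alone. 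Thus the whole corollary reduces to the identity $\mathcal{E}(P_u)=\mathcal{E}_{\mathcal{P}}$, i.e.\ to the first displayed equivalence, which I prove by two inclusions.

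The main tool for both inclusions is a reusable consequence of the order-$1$ Markov property (Definition~\ref{def:MarkovPropertyFieldOrderP} with $p=1$, so that the splitting $\sigma$-algebra is $\sigma(u(w):w\in\partial S)$): for any finite $\mathcal{Q}\subset\Gamma$, any $a,b\in\mathcal{Q}$, and any $C\subseteq\mathcal{Q}\setminus\{a,b\}$ for which $a$ and $b$ lie in different connected components of $\Gamma\setminus C$, one has $u(a)\perp u(b)\mid\{u(w):w\in C\}$. To prove it I take $S$ to be the component of the open set $\Gamma\setminus C$ containing $a$; then $\partial S\subseteq C$ and $b\in\Gamma\setminus S$, so the Markov property gives that $\sigma(u(w):w\in\partial S)$ splits $\mathcal{F}^u_+(\overline{S})\ni u(a)$ from $\mathcal{F}^u_+(\Gamma\setminus S)$, which contains $u(b)$ and $\{u(w):w\in C\setminus\partial S\}$; the weak-union graphoid axiom then upgrades $u(a)\perp(u(b),\{u(w):w\in C\setminus\partial S\})\mid\{u(w):w\in\partial S\}$ to $u(a)\perp u(b)\mid\{u(w):w\in C\}$. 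Applying this with $\mathcal{Q}=\mathcal{P}$ and $C=\mathcal{P}\setminus\{t,s\}$ gives the inclusion $\mathcal{E}(P_u)\subseteq\mathcal{E}_{\mathcal{P}}$: if $t,s$ are \emph{not} adjacent in $\Gamma_{\mathcal{P}}$ then, using admissibility to rule out a second direct edge-segment, $t$ and $s$ lie in distinct components of $\Gamma\setminus(\mathcal{P}\setminus\{t,s\})$, so they are non-adjacent in $\mathcal{E}(P_u)$; equivalently $t\stackrel{\mathcal{E}(P_u)}{\sim}s$ implies $t\stackrel{\mathcal{E}_{\mathcal{P}}}{\sim}s$.

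The harder inclusion, and the step I expect to be the main obstacle, is $\mathcal{E}_{\mathcal{P}}\subseteq\mathcal{E}(P_u)$: showing that two points joined by a single edge-segment of $\Gamma_{\mathcal{P}}$ are genuinely adjacent in the pairwise independence graph, i.e.\ $\mathrm{Cov}(u(t),u(s)\mid\{u(w):w\in C\})\neq0$ with $C=\mathcal{P}\setminus\{t,s\}$. I plan to prove this by exploiting nondegeneracy along the open edge $(t,s)$, which contains no point of $\mathcal{P}$. For each $m$ in the arc-length parametrization of $(t,s)$ the set $\{m\}\cup C$ separates $t$ from $s$, so the lemma gives $u(t)\perp u(s)\mid(u(m),\{u(w):w\in C\})$; within the Gaussian law conditioned on $\{u(w):w\in C\}$ this yields the Markov-chain factorization
\[
\mathrm{Cov}\big(u(t),u(s)\mid C\big)=\frac{\mathrm{Cov}\big(u(t),u(m)\mid C\big)\,\mathrm{Cov}\big(u(m),u(s)\mid C\big)}{\mathrm{Var}\big(u(m)\mid C\big)},
\]
where the denominator is strictly positive because $r$ is strictly positive definite and $m\notin C$ (here I abbreviate conditioning on $\{u(w):w\in C\}$ by $C$). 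Writing $g(x)=\mathrm{Cov}(u(t),u(x)\mid C)$, the function $g$ is continuous on $[t,s]$ by weak $L_2(\Omega)$-continuity of $u$ together with continuity of $r$, and $g(t)=\mathrm{Var}(u(t)\mid C)>0$. If $g$ had a zero, letting $m^{\ast}$ be its first zero and running the same factorization on the chain $t-x-m^{\ast}$ as $x\uparrow m^{\ast}$ would force $\mathrm{Var}(u(m^{\ast})\mid C)=0$, contradicting strict positive definiteness since $m^{\ast}\notin C$. Hence $g>0$ throughout, so $g(s)=\mathrm{Cov}(u(t),u(s)\mid C)>0\neq0$ and $t\stackrel{\mathcal{E}(P_u)}{\sim}s$. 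Combining the two inclusions gives $\mathcal{E}(P_u)=\mathcal{E}_{\mathcal{P}}$, and together with the faithfulness already established in the first paragraph this yields both displayed equivalences and completes the proof.
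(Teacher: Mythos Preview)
Your proof is correct. In the paper the corollary is not given an independent argument: it is simply recorded as an immediate consequence of Theorems~\ref{thm:faithfulmtp2} and~\ref{thm:explicit_link}, because the \emph{proof} of Theorem~\ref{thm:explicit_link} already establishes that $Q_{ts}<0$ exactly when $t\stackrel{\mathcal{E}_{\mathcal{P}}}{\sim}s$ and $Q_{ts}=0$ otherwise, which is precisely the identification $\mathcal{E}(P_u)=\mathcal{E}_{\mathcal{P}}$ that you spend two paragraphs re-deriving. Your route to the hard inclusion is nonetheless a mild streamlining of the paper's: there one first passes to the conditioned field $u_{t,s}$ (Lemma~\ref{lem:condpoints_markov}), then applies the Borisov-type identity (Lemma~\ref{lem:borisov_type}) and the nondegeneracy Lemma~\ref{lem:cov_func_nondeg}, the last of which is where the intermediate-value, regularity and connectedness hypotheses on $d$ from Definition~\ref{def:intermediatevalue} actually enter; you instead work directly with the conditional covariances $\mathrm{Cov}(u(t),u(x)\mid C)$ along the arc-length parametrization of the edge and a first-zero continuity argument, bypassing Lemma~\ref{lem:condpoints_markov} entirely and never touching the metric $d$. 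Since your argument yields $\mathrm{Cov}(u(t),u(s)\mid C)>0$, equivalently $Q_{ts}<0$, for neighbours and $Q_{ts}=0$ for non-neighbours, you have in effect re-proved the MTP$_2$ conclusion of Theorem~\ref{thm:explicit_link} itself without the hypotheses of Definition~\ref{def:intermediatevalue}, so your opening citation of that theorem is in fact redundant. One small misplacement: admissibility is not needed for the easy inclusion (non-adjacency already means zero edges between $t$ and $s$); where it is genuinely used is in the hard inclusion, to ensure that $\{m\}\cup C$ separates $t$ from $s$ in $\Gamma$.
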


\begin{remark}
	Observe that Corollary \ref{cor:faithfulness} gives us that in order to check conditional independence, it is enough to only look at the points on the metric graph. There is no need to construct the pairwise independence graph as their connection structure coincides with the connection structure in $\mathcal{E}_{\mathcal{P}}$, which is the connection structure induced by the metric graph.
\end{remark}

\section{Additional results and the proof of Theorem~\ref{thm:explicit_link}}\label{sec:proof}
The proof of Theorem~\ref{thm:explicit_link} requires the following three lemmata, which are interesting statements on their own. In the following result, observe that $\stackrel{d}{=}$ means equality of the finite dimensional distributions. The following notation and auxiliary result will be needed for the first lemma.

Let $\mathcal{E}_s$ denote the set of edges incident to a point \( s \in \Gamma \). If \( s \) is an interior point of an edge \( e \), then \( \mathcal{E}_s \) contains only the edge \( e \). Additionally, for a set \( S \subset \Gamma \), let $\mathcal{E}_S$ be the set of edges that intersect the interior of \( S \). We also introduce
\begin{align*}
	H_\alpha(\partial S) &= \textrm{span}\{u_e(s), u_e'(s),\ldots, u_e^{(\alpha-1)}(s):
	s\in \partial S, e\in \mathcal{E}_s \},\\
	\mathcal{F}^u_{\alpha}(\partial S) &= \sigma(u_e(s), u_e'(s),\ldots, u_e^{(\alpha-1)}(s):
	s\in \partial S, e\in \mathcal{E}_s).
\end{align*}   

We have the following Proposition, proved in \cite{BSW_Markov}:

\begin{proposition}\label{prp:charmarkovspaces}
	Let $\Gamma$ be a metric graph and let $u$ be a GRF on $\Gamma$. Then, the fact that $u$ is an MRF is equivalent to any of the following statements: For any open set $S$, 
	\begin{enumerate}[i]
		\item the orthogonal projection of $H_+(\overline{S})$ on $H_+(\Gamma\setminus S)$ is $H_+(\partial S)$;\label{prp:charmarkovspaces1}
		\item $H(\Gamma) = H_+(\overline{S})\oplus(H_+(\Gamma\setminus S)\ominus H_+(\partial S)) = H_+(\Gamma\setminus S)\oplus (H_+(\overline{S})\ominus H_+(\partial S))$. \label{prp:charmarkovspaces2}
	\end{enumerate}
\end{proposition}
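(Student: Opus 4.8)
The plan is to translate the measure-theoretic Markov (splitting) property of Definition~\ref{def:MarkovPropertyField} into the Hilbert-space geometry of the Gaussian space $H(\Gamma)$, using the standard dictionary between conditional independence of Gaussian $\sigma$-algebras and orthogonality of the corresponding Gaussian subspaces. Throughout, I would write $P_V$ for the orthogonal projection of $H(\Gamma)$ onto a closed subspace $V$, and recall that $H_+(S)=\bigcap_{\varepsilon>0}H(S_\varepsilon)$ is the Gaussian subspace associated with $\mathcal{F}^u_+(S)$.

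The first ingredient is the Gaussian splitting lemma: if $H_A,H_B,H_C$ are closed subspaces of $H(\Gamma)$ with $H_C\subseteq H_A\cap H_B$, then $\mathcal{F}_C$ splits $\mathcal{F}_A$ and $\mathcal{F}_B$ if and only if $(H_A\ominus H_C)\perp(H_B\ominus H_C)$, equivalently $P_{H_B}H_A\subseteq H_C$. I would prove this by recalling that for jointly Gaussian variables the conditional expectation given $\mathcal{F}_C$ is the orthogonal projection $P_{H_C}$, and that the conditional covariance of $\xi\in H_A$ and $\eta\in H_B$ given $\mathcal{F}_C$ equals $\langle \xi-P_{H_C}\xi,\,\eta-P_{H_C}\eta\rangle$; conditional independence is exactly the vanishing of all such conditional covariances, i.e. $P_{H_C^\perp}H_A\perp P_{H_C^\perp}H_B$, which reduces to $(H_A\ominus H_C)\perp(H_B\ominus H_C)$ once $H_C\subseteq H_A\cap H_B$. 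The equivalence with $P_{H_B}H_A\subseteq H_C$ then follows by decomposing $P_{H_B}\xi=P_{H_C}\xi+P_{H_B\ominus H_C}\xi$ for $\xi\in H_A$.

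To apply this I would take $H_A=H_+(\overline S)$, $H_B=H_+(\Gamma\setminus S)$ and $H_C=H_+(\partial S)$. Since $S$ is open, $\partial S=\overline S\setminus S\subseteq \overline S$ and $\partial S\subseteq \Gamma\setminus S$, and monotonicity of $H_+$ gives $H_+(\partial S)\subseteq H_+(\overline S)\cap H_+(\Gamma\setminus S)$, so the hypothesis $H_C\subseteq H_A\cap H_B$ holds. By the lemma, $u$ is an MRF iff $(H_A\ominus H_C)\perp(H_B\ominus H_C)$. For statement (i), the inclusion $H_+(\partial S)\subseteq P_{H_B}H_A$ is automatic (any $\xi\in H_C\subseteq H_B$ is fixed by $P_{H_B}$), while the reverse inclusion $P_{H_B}H_A\subseteq H_+(\partial S)$ is precisely the splitting condition; hence (i) is equivalent to the Markov property. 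For statement (ii), I would observe that for $\xi\in H_A$ and $\eta\in H_B\ominus H_C$ one has $\langle\xi,\eta\rangle=\langle\xi-P_{H_C}\xi,\eta\rangle$, so orthogonality of $H_A$ and $H_B\ominus H_C$ is again equivalent to $(H_A\ominus H_C)\perp(H_B\ominus H_C)$; the spanning half of the direct-sum claim is automatic because $H(\overline S)+H(\Gamma\setminus S)$ is dense in $H(\Gamma)$ (as $\overline S\cup(\Gamma\setminus S)=\Gamma$) and $H_C\subseteq H_A$ lets one absorb $H_+(\partial S)$ into $H_A$. The symmetry of the splitting condition in $A$ and $B$ yields the second decomposition in (ii) as well, so (ii) is also equivalent to the Markov property.

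The main obstacle is the technical step hidden in the dictionary itself: identifying the Gaussian subspace attached to the intersected $\sigma$-algebra $\mathcal{F}^u_+(S)=\bigcap_\varepsilon\sigma(u(s):s\in S_\varepsilon)$ with $H_+(S)=\bigcap_\varepsilon H(S_\varepsilon)$. In general $\sigma(\bigcap_\varepsilon V_\varepsilon)$ can be strictly smaller than $\bigcap_\varepsilon\sigma(V_\varepsilon)$; equality here relies on the special structure of Gaussian Hilbert spaces, and I would justify it via a reverse-martingale argument for the decreasing family $H(S_\varepsilon)$ (using the orthogonal-projection characterization of Gaussian conditional expectations together with $L_2(\Omega)$ convergence of $P_{H(S_\varepsilon)}$ to $P_{H_+(S)}$), taking care that continuity and separability of $u$ reduce the intersection over $\varepsilon>0$ to a countable one. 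Once this identification is in place, the remaining manipulations are the routine Hilbert-space computations sketched above.
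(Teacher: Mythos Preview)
The paper does not actually prove this proposition: it is stated there with the attribution ``proved in \cite{BSW_Markov}'' and used as a black box. So there is no proof in the paper against which to compare your proposal.

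That said, your outline is correct and is exactly the standard route (going back to K\"unsch and used in \cite{BSW_Markov}): translate the splitting of $\sigma$-algebras into orthogonality of the associated Gaussian subspaces via the identity $\mathrm{Cov}(\xi,\eta\mid\mathcal{F}_C)=\langle \xi-P_{H_C}\xi,\eta-P_{H_C}\eta\rangle$, check the inclusion $H_+(\partial S)\subseteq H_+(\overline S)\cap H_+(\Gamma\setminus S)$, and then read off (i) and (ii) as reformulations of $(H_A\ominus H_C)\perp(H_B\ominus H_C)$. Your handling of the spanning part of (ii) is fine once you note that under the orthogonality the sum $H_C\oplus(H_A\ominus H_C)\oplus(H_B\ominus H_C)$ is closed and contains all $u(s)$. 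The one technical point you flag---that $\sigma(H_+(S))=\mathcal{F}^u_+(S)$---is indeed the only nonformal step; your reverse-martingale justification is the right one, and you do not need continuity or separability of $u$ for it, since monotonicity of $\varepsilon\mapsto H(S_\varepsilon)$ already reduces the intersection to a countable one.
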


We are now in a position to prove the first lemma.

        \begin{lemma}\label{lem:condpoints_markov}
            Let $\Gamma$ be a compact metric graph, $\mathcal{P}\subset\Gamma$ be a finite collection of points, and $u$ be a GMRF of order 1 on $\Gamma$. Define $u_{\mathcal{P}}(\cdot)$ as the field obtained after conditioning $u(\cdot)$ on $\{u(p)=0: p\in\mathcal{P}\}$, that is, 
			\begin{equation}\label{eq:dist}
			u_{\mathcal{P}}(\cdot) \stackrel{d}{=} u(\cdot) | u(p) = 0: p\in\mathcal{P}. 
			\end{equation}
			Then, $u_{\mathcal{P}}(\cdot)$ is a GMRF of order 1. Furthermore, for every $p\in\mathcal{P}$,  $u_{\mathcal{P}}(p) = 0$ almost surely.
        \end{lemma}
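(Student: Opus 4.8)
## Proof Proposal for Lemma~\ref{lem:condpoints_markov}

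The plan is to handle the two assertions separately. The statement that $u_{\mathcal{P}}(p) = 0$ almost surely for every $p \in \mathcal{P}$ is immediate: conditioning a jointly Gaussian family on $\{u(p) = 0 : p \in \mathcal{P}\}$ produces a Gaussian field whose value at each $p \in \mathcal{P}$ has a degenerate (point mass at $0$) conditional distribution, since $\mathrm{Var}(u(p) \mid u(q) = 0, q \in \mathcal{P}) = 0$ when $p$ is among the conditioning points. So the real content is that $u_{\mathcal{P}}$ is again a GMRF of order~1.

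For the Markov property, I would invoke Theorem~\ref{thm:MarkovLocalCM}: it suffices to show that the Cameron--Martin space $\mathcal{H}_{\mathcal{P}}(\Gamma)$ of $u_{\mathcal{P}}$ is local in the sense of Definition~\ref{def:localCMspaces}. The key structural fact is that the covariance function of $u_{\mathcal{P}}$ is the conditional covariance
\[
r_{\mathcal{P}}(s,t) = r(s,t) - r(s,\mathcal{P})\, r(\mathcal{P},\mathcal{P})^{-1}\, r(\mathcal{P},t),
\]
where $r(\mathcal{P},\mathcal{P})$ is the (strictly positive definite, hence invertible) covariance matrix of $\{u(p):p\in\mathcal{P}\}$. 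From this one sees that the Gaussian space $H_{\mathcal{P}}(S)$ of $u_{\mathcal{P}}$ on a set $S$ is the orthogonal complement, inside $H(S \cup \mathcal{P})$, of $H(\mathcal{P})$ — equivalently, $u_{\mathcal{P}}(s)$ is the residual of $u(s)$ after projecting onto $H(\mathcal{P})$. Correspondingly, $\mathcal{H}_{\mathcal{P}}(\Gamma)$ can be identified with the subspace of $\mathcal{H}(\Gamma)$ consisting of functions $h$ with $h(p) = 0$ for all $p \in \mathcal{P}$ (with the same inner product): indeed $h \in \mathcal{H}_{\mathcal{P}}(\Gamma)$ iff $h(\cdot) = \mathbb{E}(u_{\mathcal{P}}(\cdot) v)$ for some $v \in H_{\mathcal{P}}(\Gamma)$, and since $u_{\mathcal{P}}(\cdot)$ differs from $u(\cdot)$ by an element of $H(\mathcal{P})$ which is orthogonal to $v$, one gets $h(\cdot) = \mathbb{E}(u(\cdot) v)$, so $h \in \mathcal{H}(\Gamma)$, and $h(p) = \mathbb{E}(u_{\mathcal{P}}(p) v) = 0$ because $u_{\mathcal{P}}(p) = 0$.

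With that identification in hand, locality of $\mathcal{H}_{\mathcal{P}}(\Gamma)$ follows from locality of $\mathcal{H}(\Gamma)$ (which holds by Theorem~\ref{thm:MarkovLocalCM} since $u$ is a GMRF). Property~(\ref{def:localCMspaces1}) is inherited directly: if $h_1, h_2 \in \mathcal{H}_{\mathcal{P}}(\Gamma) \subset \mathcal{H}(\Gamma)$ have disjoint supports then $\langle h_1, h_2 \rangle = 0$. Property~(\ref{def:localCMspaces2}) requires a small check: if $h \in \mathcal{H}_{\mathcal{P}}(\Gamma)$ decomposes as $h = h_1 + h_2$ with disjoint supports, then by locality of $\mathcal{H}(\Gamma)$ we have $h_1, h_2 \in \mathcal{H}(\Gamma)$, and one must verify $h_i(p) = 0$ for all $p \in \mathcal{P}$; this holds because for each $p$ at most one of $h_1, h_2$ is nonzero at $p$ (disjoint supports), and since $h_1(p) + h_2(p) = h(p) = 0$, that one is also zero at $p$ — so $h_1, h_2 \in \mathcal{H}_{\mathcal{P}}(\Gamma)$. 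Finally, the order is preserved: $u_{\mathcal{P}}$ still has weak $L_2(\Omega)$-derivatives of order~1 and the relevant weak continuity, since its covariance $r_{\mathcal{P}}$ inherits the requisite smoothness from $r$ (the correction term is a finite sum of products of evaluations and smooth functions of $s$ and $t$), and it is a GMRF of order~1 rather than higher because conditioning on finitely many point values does not create new smoothness.

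The main obstacle I anticipate is making the identification $\mathcal{H}_{\mathcal{P}}(\Gamma) = \{h \in \mathcal{H}(\Gamma) : h|_{\mathcal{P}} = 0\}$ fully rigorous, in particular verifying that the inner product on $\mathcal{H}_{\mathcal{P}}$ agrees with the restriction of the inner product on $\mathcal{H}$ — this rests on the isometry between a Gaussian space and its Cameron--Martin space together with the orthogonal-projection characterization of conditional Gaussian fields, and it needs the strict positive definiteness of $r$ to guarantee $r(\mathcal{P},\mathcal{P})$ is invertible so that the conditioning is well defined. A secondary point to be careful about is that "GMRF of order~1" includes the $L_2(\Omega)$-differentiability and weak-continuity hypotheses of Definition~\ref{def:RandomFieldOrderP}; these must be checked for $u_{\mathcal{P}}$ from the explicit form of $r_{\mathcal{P}}$, but this is routine given that $r$ already satisfies them.
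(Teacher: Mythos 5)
Your first half follows essentially the paper's route: condition by orthogonal projection, identify the Cameron--Martin space of $u_{\mathcal{P}}$ with $\{h\in\mathcal{H}(\Gamma): h|_{\mathcal{P}}=0\}$ (the orthogonal complement of $\mathrm{span}\{r(\cdot,p):p\in\mathcal{P}\}$), and inherit locality from $\mathcal{H}(\Gamma)$ via Theorem~\ref{thm:MarkovLocalCM}; your explicit check of property~(\ref{def:localCMspaces2}) using $h_i(p)=0$ is in fact more careful than the paper's one-line assertion, and your argument that $u_{\mathcal{P}}(p)=0$ a.s.\ is fine. (A minor point: the lemma does not assume $r(\mathcal{P},\mathcal{P})$ is invertible, so it is cleaner to define the conditioning through the projection onto the finite-dimensional span, as the paper does, rather than through $r(\mathcal{P},\mathcal{P})^{-1}$.)

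The genuine gap is the final step, where you claim $u_{\mathcal{P}}$ is Markov \emph{of order 1} ``because conditioning on finitely many point values does not create new smoothness.'' In this paper the order of the Markov property is not a smoothness statement: by Definition~\ref{def:MarkovPropertyFieldOrderP} with $p=1$, one must show that the $\sigma$-algebra generated by the \emph{point values} $\sigma(u_{\mathcal{P}}(s):s\in\partial S)$ --- and not the germ $\sigma$-algebra $\mathcal{F}^{u}_{+,\mathcal{P}}(\partial S)$ --- splits the two sides, for every open $S$. Locality of the Cameron--Martin space (Theorem~\ref{thm:MarkovLocalCM}) only yields the germ-field Markov property of Definition~\ref{def:MarkovPropertyField}, which is strictly weaker in general, and the order-1 statement is exactly what is needed later (in the proof of Theorem~\ref{thm:explicit_link} one splits by the single variable $u_{t,s}(t^*)$). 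The paper closes this gap by a separate argument: since $u$ is Markov of order 1, Proposition~\ref{prp:charmarkovspaces} combined with Lemma~3.3 of Mandrekar gives $H_+(\partial O)=\mathrm{span}\{u(s):s\in\partial O\}$ for open sets $O$ with finite boundary; projecting this identity onto $H_{\mathcal{P}}(\Gamma)^{\perp}$ shows $H^{\perp}_{\mathcal{P},+}(\partial O)=\mathrm{span}\{u_{\mathcal{P}}(s):s\in\partial O\}$, and a second application of Mandrekar's lemma identifies the germ $\sigma$-algebra of $u_{\mathcal{P}}$ at $\partial O$ with $\sigma(u_{\mathcal{P}}(s):s\in\partial O)$, which is what ``order 1'' requires. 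Some argument of this type (transferring the order-1 property of $u$ to the conditioned field, not a smoothness heuristic) is needed to complete your proof.
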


		\begin{proof}
			Let $H(\Gamma) = \overline{\hbox{span}\{u(s):s\in\Gamma\}}$ be the closure, in $L_2(\Omega)$, of the linear span of $u$ in $\Gamma$. Furthermore, let $H_{\mathcal{P}}(\Gamma) = \hbox{span}\{u(p): p\in\mathcal{P}\}$ and observe that since $H_{\mathcal{P}}$ is finite-dimensional, it is a closed subspace of $H(\Gamma)$. By the orthogonal projection theorem we have that
		$H(\Gamma) = H_{\mathcal{P}}(\Gamma)^\perp \oplus H_{\mathcal{P}}(\Gamma).$
		We will now write the above identity in terms of Cameron-Martin spaces. More precisely, let $\mathcal{H}(\Gamma)$ be the Cameron-Martin space of $u$ and let ${\Phi:H(\Gamma)\to\mathcal{H}(\Gamma)}$ be the isometric isomorphism map given by $\Phi(Y)(s) := \mathbb{E}(Yu(s))$, $Y\in H(\Gamma)$. Then, we have that
		$\mathcal{H}(\Gamma) = \mathcal{H}_{\mathcal{P}}(\Gamma)^\perp\oplus \mathcal{H}_{\mathcal{P}}(\Gamma),$
		where $\mathcal{H}_{\mathcal{P}}(\Gamma) = \Phi(H_{\mathcal{P}}(\Gamma))$. 

		Let $r(\cdot,\cdot)$ be the covariance function of $u$, and let $\Pi_{\mathcal{P}}^\perp$ be the orthogonal projection operator onto $\mathcal{H}_{\mathcal{P}}(\Gamma)^\perp$. We have that for any $f\in \mathcal{H}_{\mathcal{P}}(\Gamma)^\perp$:
		$$\<f, \Pi_{\mathcal{P}}^\perp(r(\cdot, s))\>_{\mathcal{H}(\Gamma)} = \<f, r(\cdot, s)\>_{\mathcal{H}(\Gamma)} = f(s).$$
		Thus, $r_{\mathcal{P}}(\cdot,s) := \Pi_{\mathcal{P}}^\perp(r(\cdot, s)), s\in\Gamma$, is a reproducing kernel for $\mathcal{H}_{\mathcal{P}}(\Gamma)^\perp$, which shows that $\mathcal{H}_{\mathcal{P}}(\Gamma)^\perp$ is a reproducing kernel Hilbert space. Similarly, $\mathcal{H}_{\mathcal{P}}(\Gamma)$ is the finite-dimensional Hilbert space:
		$\mathcal{H}_{\mathcal{P}}(\Gamma) = \hbox{span}\{r(\cdot, p): p\in\mathcal{P}\}.$
		In particular, if we let $\Pi_{\mathcal{P}}$ denote the orthogonal projection onto $\mathcal{H}_{\mathcal{P}}(\Gamma)$, we obtain that, for $f \in \mathcal{H}(\Gamma)$,
		$$
		\Pi_{\mathcal{P}}(f)(\cdot) = \sum_{p \in \mathcal{P}} c_p(f) r(\cdot, p),
		$$
		where $c_p(f)\in\mathbb{R}$. Hence, 
		$$r(\cdot,s) = \Pi_{\mathcal{P}}^\perp(r(\cdot,s)) + \sum_{p\in\mathcal{P}} c_p(s) r(\cdot, p).$$
		So that, by applying $\Phi^{-1}$, we obtain
		\begin{equation}\label{eq:decomp_X_part1}
			u(s) = u_{\mathcal{P}}(s) + \sum_{p\in\mathcal{P}} c_p(s) u(p),
		\end{equation}
		where $u_{\mathcal{P}}(s) := \Phi^{-1}(\Pi_{\mathcal{P}}^\perp(r(\cdot,s)))$, $u_{\mathcal{P}}(\cdot)$ is independent of $\sigma(u(p):p\in\mathcal{P})$, and, by construction, $u_{\mathcal{P}}(\cdot)$ is a centered Gaussian field on $\Gamma$ with Cameron-Martin space $\mathcal{H}_{\mathcal{P}}(\Gamma)^\perp$. Now, observe that \eqref{eq:decomp_X_part1} gives us that
		$$u_{\mathcal{P}}(\cdot) \stackrel{d}{=} u(\cdot) | u(p) = 0: p\in\mathcal{P}.$$
		Furthermore, since $u$ is a GMRF on $\Gamma$, it follows from Theorem 1 in the main text that the space $\mathcal{H}(\Gamma)$ is local. Now, observe that $\mathcal{H}_{\mathcal{P}}(\Gamma)^\perp\subset \mathcal{H}(\Gamma)$, so that $\mathcal{H}_{\mathcal{P}}(\Gamma)^\perp$ is local, which implies that $u_{\mathcal{P}}(\cdot)$ is a GMRF.

		Now, observe that $u_{\mathcal{P}}(\cdot)$ is independent of $\sigma(u(p):p\in\mathcal{P})$. Thus, for $p\in\mathcal{P}$,
		$$\mathbb{E}(u_{\mathcal{P}}(p)^2) = \mathbb{E}(u_{\mathcal{P}}(p)^2 + u_{\mathcal{P}}(p)(u(p) - u_{\mathcal{P}}(p))) = \mathbb{E}(u_{\mathcal{P}}(p) u(p)) = 0,$$
		which shows that $u_{\mathcal{P}}(p) = 0$ almost surely for $p\in\mathcal{P}$. 
		
		It remains to be shown that $u_{\mathcal{P}}(\cdot)$ is Markov of order 1. To this end, observe that by \eqref{eq:decomp_X_part1}, we have that for any open set $O\subset\Gamma$ whose boundary consists of finitely many points,
		$$H_{\mathcal{P},+}^\perp(\partial O) \oplus H_{\mathcal{P},+}(\partial O) = H_+(\partial O),$$
		where, for a set $S$, $H^\perp_\mathcal{P}(S)$ denotes the Gaussian space associated to $u_{\mathcal{P}}(\cdot)$ on the set $S$.
		Therefore, since $u(\cdot)$ is Markov of order 1, it follows from Proposition \ref{prp:charmarkovspaces}, Definition \ref{def:MarkovPropertyFieldOrderP} and \cite[Lemma 3.3]{Mandrekar1976} that 
		$$H_+(\partial O) = \hbox{span}\{u(s):s\in\partial O\}.$$
		Therefore, 
		\begin{equation}\label{eq:identity_space_1}
			H_{\mathcal{P},+}^\perp(\partial O) = H_+(\partial O) \ominus H_{\mathcal{P},+}(\partial O) = \hbox{span}\{u(s):s\in\partial O\} \ominus H_{\mathcal{P}}(\partial O).
		\end{equation}
		Now, let $\widehat{\Pi}_{\mathcal{P}}^\perp$ be the projection from $H(\Gamma)$ onto $H_{\mathcal{P}}(\Gamma)^\perp$ and observe that \eqref{eq:identity_space_1} implies that 
		\begin{align*}
			H_{\mathcal{P},+}^\perp(\partial O) &\supset \hbox{span}\{u(s):s\in\partial O\} \ominus H_{\mathcal{P}}(\Gamma) = \hbox{span}\{\widehat{\Pi}_{\mathcal{P}}^\perp(u(s)):s\in\partial O\}\\
			&= \hbox{span}\{u_{\mathcal{P}}(s):s\in\partial O\}.
		\end{align*}
		On the other hand, we have that $H_{\mathcal{P},+}^\perp(\partial O) \subset \hbox{span}\{u(s):s\in\partial O\}$, which implies that
		\begin{align*}
			H_{\mathcal{P},+}^\perp(\partial O) &= \widehat{\Pi}_{\mathcal{P}}^\perp(H_{\mathcal{P},+}^\perp(\partial O)) \subset \widehat{\Pi}_{\mathcal{P}}^\perp(\hbox{span}\{u(s):s\in\partial O\})\\
			& = \hbox{span}\{u_{\mathcal{P}}(s):s\in\partial O\}.
		\end{align*}
		Therefore, we obtain that
		$H_{\mathcal{P},+}^\perp(\partial O) = \hbox{span}\{u_{\mathcal{P}}(s):s\in\partial O\}.$ We can now apply \cite[Lemma 3.3]{Mandrekar1976} to obtain that 
		$$
		\mathcal{F}_{+,\mathcal{P}}^u(\partial{O}) = \sigma(H_{\mathcal{P},+}^\perp(\partial O)) = \sigma(\hbox{span}\{u_{\mathcal{P}}(s):s\in\partial O\})
		$$
		 for every $O\subset\Gamma$, where 
		 $$
		 \mathcal{F}_{+,\mathcal{P}}^u(\partial{O}) = \bigcap_{\varepsilon>0} \mathcal{F}^u_{\mathcal{P}}((\partial O)_{\varepsilon}), 
		 $$
		 and for a set $S$, $\mathcal{F}_{\mathcal{P}}^u(S) = \sigma(u_{\mathcal{P}}(s): s\in S)$. This shows that  $u_{\mathcal{P}}(\cdot)$ is Markov of order 1 and concludes the proof.
		\end{proof}

		Next, we obtain a useful identity for covariance functions of GRFs on metric graphs when a single point splits the graph into two parts:

        \begin{lemma}\label{lem:borisov_type}
            Let $u(\cdot)$ be a GRF on a compact metric graph $\Gamma$. Let $e\in\mathcal{E}$ be an edge of $\Gamma$ and assume that for $t\in e$, $\sigma(u(t))$ splits $\sigma(u(s): s\in O)$ and $\sigma(u(s): s\in O^c)$, where $O\subset\Gamma$ is an open set such that $(w,e)\in O$ for $w>t$, and $(w,e)\not\in O$ for $w\leq t$, where $e = \{(w,e): w \in [0,l_e]\}$, $l_e>0$. Therefore, if $r(\cdot,\cdot)$ denotes the covariance function of $u$, 
            \begin{equation}\label{eq:borisov}
                r(t,t)r(a,b) = r(a,t)r(b,t),
            \end{equation}
            where $a\in O$ and $b\in O^c$. 
        \end{lemma}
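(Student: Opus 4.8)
The plan is to exploit the conditional-independence hypothesis together with the Gaussian structure: conditioning a jointly Gaussian vector on a single coordinate is an explicit affine operation, so the splitting assumption translates directly into an algebraic identity among the entries of $r$.

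First I would fix $a\in O$ and $b\in O^c$ and consider the three-dimensional Gaussian vector $(u(a),u(t),u(b))$. The hypothesis that $\sigma(u(t))$ splits $\sigma(u(s):s\in O)$ and $\sigma(u(s):s\in O^c)$ implies in particular that $u(a)$ and $u(b)$ are conditionally independent given $u(t)$. For jointly Gaussian variables, conditional independence given $u(t)$ is equivalent to the vanishing of the partial covariance
\[
\operatorname{Cov}\bigl(u(a),u(b)\bigr) - \frac{\operatorname{Cov}(u(a),u(t))\,\operatorname{Cov}(u(b),u(t))}{\operatorname{Var}(u(t))} = 0,
\]
provided $\operatorname{Var}(u(t)) = r(t,t) > 0$. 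Rearranging this gives exactly \eqref{eq:borisov}: $r(t,t)r(a,b) = r(a,t)r(b,t)$. If instead $r(t,t)=0$, then $u(t)=0$ almost surely, so $\sigma(u(t))$ is trivial, the splitting hypothesis forces $u(a)$ and $u(b)$ to be (unconditionally) independent, hence $r(a,b)=0$, and both sides of \eqref{eq:borisov} vanish; so the identity holds in all cases.

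The only point requiring a little care is justifying the equivalence between the splitting property of the $\sigma$-algebras and the scalar conditional independence $u(a)\perp u(b)\mid u(t)$: the definition of splitting in Section~\ref{sec:review_markov} is phrased for the full $\sigma$-algebras $\sigma(u(s):s\in O)$ and $\sigma(u(s):s\in O^c)$, and since $a\in O$ and $b\in O^c$ we have $\sigma(u(a))\subset\sigma(u(s):s\in O)$ and $\sigma(u(b))\subset\sigma(u(s):s\in O^c)$, so conditional independence of the larger $\sigma$-algebras given $\sigma(u(t))$ passes to the sub-$\sigma$-algebras. Then the translation of "$u(a)\perp u(b)\mid u(t)$" into the partial-covariance identity is the standard computation for Gaussian vectors (the conditional distribution of $(u(a),u(b))$ given $u(t)$ is Gaussian with conditional cross-covariance equal to the Schur complement expression above).

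I do not expect a serious obstacle here; this lemma is essentially a packaging of the elementary Gaussian fact that conditioning on one variable kills exactly the rank-one piece $r(\cdot,t)r(t,t)^{-1}r(t,\cdot)$ of the covariance. The mildly delicate part is handling the degenerate case $r(t,t)=0$ cleanly and making sure the reduction from $\sigma$-algebra splitting to three scalars is stated correctly; both are routine.
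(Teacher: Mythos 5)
Your proposal is correct and follows essentially the same route as the paper: reduce the $\sigma$-algebra splitting to the scalar conditional independence $u(a)\perp u(b)\mid u(t)$ and then use the explicit Gaussian conditioning (equivalently, the vanishing of the partial covariance $r(a,b)-r(a,t)r(b,t)/r(t,t)$) to obtain \eqref{eq:borisov}, treating $r(t,t)=0$ separately. The only cosmetic difference is in the degenerate case, where the paper simply notes the identity is trivial (both sides vanish since $r(a,t)=r(b,t)=0$ by Cauchy--Schwarz), whereas you additionally invoke the splitting hypothesis to get $r(a,b)=0$, which is not needed.
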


        \begin{proof}
			Let us assume that ${\mathbb{E}(u(t)^2)>0}$, since if $u(t) = 0$ almost surely, \eqref{eq:borisov} is trivial. Let $O$ be as in the statement. Since $\sigma(u(t))$ splits $\sigma(u(s): s\in O)$ and $\sigma(u(s): s\in O^c)$, it follows that for $a\in O$ and $b\in O^c$, $u(a)$ and $u(b)$ are conditionally independent given $u(t)$:
$$\mathbb{E}(u(a)u(b)|u(t)) = E(u(a)|u(t)) E(u(b)|u(t)).$$
Now, since $u(\cdot)$ is a Gaussian field, it follows that
$$\mathbb{E}(u(a)|u(t)) = \frac{\mathbb{E}(u(a)u(t))}{\mathbb{E}(u(t)^2)} u(t) \quad\hbox{and}\quad \mathbb{E}(u(b)|u(t)) = \frac{\mathbb{E}(u(b)u(t))}{\mathbb{E}(u(t)^2)} u(t).$$
Thus,
$$\mathbb{E}(u(a)u(b)|u(t)) = \frac{\mathbb{E}(u(a)u(t))\mathbb{E}(u(b)u(t))}{\mathbb{E}(u(t)^2)^2} u(t)^2.$$
The result now follows from taking expectation on both sides of the above expression.
		\end{proof}

		Observe that \eqref{eq:borisov} is reminiscent of (1) in \cite{borisov_markov}. 
		The final result we need to prove the main result is that the identity \eqref{eq:borisov} implies some non-degeneracy of the covariance function.

        \begin{lemma}\label{lem:cov_func_nondeg}
            Let $\Gamma$ be a compact metric graph and $d(\cdot,\cdot)$ be a metric that satisfies the intermediate value property on $\Gamma$, is regular and is connected. Let $r$ be a continuous covariance function on a compact metric graph $\Gamma$. Additionally, suppose that given an edge $e\in\mathcal{E}$ and any $t_1,t_2,t_3\in e$, we have that
            \begin{equation}\label{eq:borisov2}
                r(t_1,t_2) r(t_2,t_3) = r(t_2,t_2) r(t_1,t_3).
            \end{equation}
            If for every $t\in e$, we have $r(t,t)>0$, then for any $t,s\in e$, we have $r(t,s)>0$.
        \end{lemma}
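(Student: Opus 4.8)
The plan is to exploit the multiplicative identity \eqref{eq:borisov2} to reduce the problem to a one-dimensional statement about the sign of $r$ along the edge $e$, and then use the regularity of the metric to propagate positivity from a small neighborhood of any point to the whole edge. First I would observe that \eqref{eq:borisov2} can be rewritten, for $t_2$ with $r(t_2,t_2)>0$, as
\[
r(t_1,t_3) = \frac{r(t_1,t_2)\,r(t_2,t_3)}{r(t_2,t_2)},
\]
which is exactly the statement that, restricted to $e$, the kernel $r$ has the separable form $r(t,s) = \varphi(t)\varphi(s)$ for some function $\varphi$ on $e$ whenever we can anchor at a single point; more precisely, fixing any reference point $t_0\in e$ and setting $\varphi(t) = r(t,t_0)/\sqrt{r(t_0,t_0)}$, the identity forces $r(t,s) = \varphi(t)\varphi(s)$ for all $t,s$ in the connected component of $e$ (in the metric topology) on which this is consistent. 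Since $r$ is a covariance function, $r(t,t) = \varphi(t)^2 \ge 0$, and the hypothesis $r(t,t)>0$ for all $t\in e$ gives $\varphi(t)\neq 0$ everywhere.

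The remaining issue is the \emph{sign} of $\varphi$: a priori $\varphi$ could change sign along $e$, in which case $r(t,s) = \varphi(t)\varphi(s)$ would be negative for some pair. So the key step is to show $\varphi$ has constant sign. Here I would use continuity of $r$ together with the local multiplicative structure: near any point $t^*\in e$, choose $\delta>0$ small enough (using continuity of $r$ and $r(t^*,t^*)>0$) so that $r(t,s)>0$ for all $t,s$ in the $\delta$-ball around $t^*$; this is possible because $r(t^*,t^*)>0$ and $r$ is continuous, so $r(t,s)$ stays close to $r(t^*,t^*)>0$. Hence $\varphi$ does not change sign on any sufficiently small ball, i.e. the sign of $\varphi$ is locally constant on $e$.

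To upgrade ``locally constant sign'' to ``globally constant sign'' I would invoke the hypothesis that $d$ is connected in the sense of Definition~\ref{def:intermediatevalue} — the edge $e$, viewed as a simple path, is a connected subset of $(\Gamma,d)$ — and run a chaining argument along $e$ using the intermediate value property and regularity: given $t,s\in e$, build the finite sequence $p_0=s, p_1, \dots, p_{N}$ from Definition~\ref{def:intermediatevalue}(ii) with step size $\tau$ chosen smaller than the local radius $\delta$ that guarantees positivity, so that each consecutive pair $p_{k-1},p_k$ lies in a common small ball; then $\mathrm{sign}\,\varphi(p_{k-1}) = \mathrm{sign}\,\varphi(p_k)$ for every $k$, and $d(t,p_N)<\tau$ puts $t$ in the same small ball as $p_N$, forcing $\mathrm{sign}\,\varphi(t) = \mathrm{sign}\,\varphi(s)$. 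Consequently $\varphi$ has constant sign on $e$, whence $r(t,s) = \varphi(t)\varphi(s)>0$ for all $t,s\in e$.

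I expect the main obstacle to be the bookkeeping in the chaining step: one must choose $\tau$ uniformly small enough to stay within positivity balls along the \emph{entire} path, which requires either a compactness argument (cover $e$ by finitely many balls on which $r$ is positive, take $\tau$ below a Lebesgue number) or a careful use of the finiteness guaranteed by regularity in Definition~\ref{def:intermediatevalue}(ii). A minor subtlety is justifying the global separable representation $r(t,s)=\varphi(t)\varphi(s)$ from the purely pairwise identity \eqref{eq:borisov2}: one anchors at a point $t_0$ with $r(t_0,t_0)>0$ (which exists since $r(t,t)>0$ for all $t\in e$) and checks that the formula is consistent by applying \eqref{eq:borisov2} with various choices of the middle index; this is routine but should be spelled out.
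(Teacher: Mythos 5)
Your proposal is correct, and it organizes the argument differently from the paper while relying on the same core chaining device. The paper proceeds in two stages: it first shows by contradiction that $r$ has no zeros on $e\times e$, chaining points of step $\delta^*$ along the path from $s$ toward $t$ (uniform continuity of $r$ on the compact set $e\times e$ gives a uniform radius of guaranteed positivity, and regularity of the metric guarantees the chain terminates within $\delta^*$ of $t$, yielding the contradiction); it then excludes negative values via the intermediate value theorem, which is where the connectedness hypothesis on $d$ enters. You instead extract from \eqref{eq:borisov2} the rank-one factorization $r(t,s)=\varphi(t)\varphi(s)$ with $\varphi(t)=r(t,t_0)/\sqrt{r(t_0,t_0)}$; note that your worry about ``consistency'' is unnecessary, since a single application of the identity with middle index $t_2=t_0$ gives the formula for all $t,s\in e$ at once. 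Then $r(t,t)=\varphi(t)^2>0$ forces $\varphi\neq 0$, and positivity of $r$ reduces to constancy of the sign of $\varphi$, which you establish by essentially the same chaining along the edge that the paper uses (a uniform positivity radius from compactness, finiteness of the chain from the regularity of $d$, and the final point landing within one step of $t$). The comparison: your factorization makes the content of hypothesis \eqref{eq:borisov2} transparent (on each edge $r$ is a rank-one kernel), avoids the proof by contradiction, and in fact bypasses the paper's final intermediate-value step, so the connectedness assumption of Definition~\ref{def:intermediatevalue} that you invoke is not actually needed in your route; the paper's version works directly with $r$ without introducing $\varphi$, at the price of a separate argument to rule out negative values. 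Do make the uniform choice of the step $\tau$ explicit as you indicate (compactness of $e\times e$ plus $\min_{w\in e} r(w,w)>0$ suffices, exactly as in the paper), since the locally chosen $\delta$'s alone would not justify a single chain with fixed step size.
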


\begin{proof}
	We begin by showing that for every $t,s\in e$, we have $r(t,s)\neq 0$. We will prove this by contradiction. To this end, suppose that there exist $t,s\in e$ such that $r(t,s) = 0$. Thus, by \eqref{eq:borisov2}, for every $t^*\in e$, either $r(t,t^*) =0$ or $r(s,t^*)=0$. 

Observe that, by assumption, for every $w\in e$, $r(w,w)>0$. Since $e\times e$ is compact and $r$ is continuous, we have that $r$ is uniformly continuous on $e\times e$. Therefore, there exists $\delta>0$ such that for every $u,v \in e$, satisfying $d(u,v)<\delta$, we have $r(u,v)>0$. 

Let $[t,s]$ denote the path inside the edge $e$ that connects $t$ to $s$. Take any $\delta^*$ such that $0<\delta^* < \min\{\delta, d(t,s)\}$, and let $p(t,s;1)\in [t,s]$ be such that $d(s, p(t,s;1)) = \delta^*$. By \eqref{eq:borisov2}, since $r(s, p(t,s;1)) > 0$, we have that $r(t, p(t,s;1)) = 0$. We now repeat the argument by choosing $p(t,s;2)\in [t, p(t,s;1)]$ such that $d(p(t,s;1), p(t,s;2)) = \delta^*$. By \eqref{eq:borisov2}, we have that $r(t, p(t,s;2)) = 0$, since $r(p(t,s;1), p(t,s;2)) > 0$. Hence, by proceeding like this, we obtain a sequence of points $p(t,s;k) \in [t, p(t,s;k-1)]$ such that $r(t, p(t,s;k)) = 0$, and $d(p(t,s;k), p(t,s;k-1)) = \delta^*$. Thus, it follows from the definition of regular metric, that there exists some $N\in\mathbb{N}$ such that $d(t, p(t,s;N)) < \delta^*$. This gives us a contradiction since this implies that $r(t,p(t,s;N)) > 0$. 

This contradiction shows that for every $t,s\in e$, we have $r(t,s)\neq 0$. The result now follows from continuity of $r$ and the intermediate value theorem. Indeed, if there exist $t,s\in e$ such that $r(t,s) < 0$, by the intermediate value theorem and the fact that the metric is connected, there exists some $t^* \in [t,s]$ such that $r(t,t^*)=0$, which yields a contradiction.
\end{proof}

We are now ready to prove the main result.

\begin{proof}[Proof of Theorem~\ref{thm:explicit_link}]
	Let $u(\mathcal{P}) := \{u(s): s\in \mathcal{P}\}$. Then, we have that ${u(\mathcal{P}) \sim \mathcal{N}(\boldsymbol{0}, \boldsymbol{\Sigma})}$, where $\boldsymbol{\Sigma} = \{r(t,s): t,s\in \mathcal{P}\}$. Since $r$ is strictly positive definite, the precision matrix $\boldsymbol{Q} := \boldsymbol{\Sigma}^{-1} = [Q_{ts}]_{t,s\in\mathcal{P}}$ exists. Furthermore, the strict positive-definiteness also implies that for every $s\in \mathcal{P}, Q_{ss} > 0$. 

	Let $s,t\in\mathcal{P}$ be such that $t\not\stackrel{\mathcal{E}_{\mathcal{P}}}{\sim} s$. Then, since $u(\cdot)$ is a GMRF of order 1, 
	$$
	u(t) \perp u(s) | \{u(w): w\in \mathcal{P}\setminus\{t,s\}\}. 
	$$
	Indeed, take the open set 
	$$
	O = \hbox{int } \mathcal{E}_{t,\mathcal{P}} \cup \{t\} \cup (\hbox{int }\mathcal{E}_{\mathcal{P}}\setminus \mathcal{E}_{s,\mathcal{P}}), 
	$$
	where 
	$$
	\hbox{int }\mathcal{E}_{\mathcal{P}} = \bigcup_{e\in \mathcal{E}_{\mathcal{P}}} \hbox{int }e
	$$ and, for $p\in\mathcal{P}$, $\mathcal{E}_{p,\mathcal{P}}$ denotes the set of all edges in $\mathcal{E}_{\mathcal{P}}$ that are incident to $p$. Note that $\hbox{int } \mathcal{E}_{t,\mathcal{P}} \cup \{t\}$ is open in $\Gamma$ and $\hbox{int }\mathcal{E}_{\mathcal{P}}\setminus \mathcal{E}_{s,\mathcal{P}}$ is open. Further, $\partial O = \mathcal{P}\setminus \{t,s\}$, with $t\in O$ and $s\not\in O$. Therefore, if $t\not\stackrel{\mathcal{E}_{\mathcal{P}}}{\sim} s$, we have $Q_{st} = 0$. 

	It remains to be shown that if $t\stackrel{\mathcal{E}_{\mathcal{P}}}{\sim} s$, then $Q_{st} < 0$. To this end, let $t,s\in\mathcal{P}$ be such that $t\stackrel{\mathcal{E}_{\mathcal{P}}}{\sim} s$. We start by considering the conditioned field
	$$
	u_{t,s}(\cdot) = u(\cdot) | u(p) = 0: p\in\mathcal{P}\setminus\{t,s\},
	$$
	which, by Lemma~\ref{lem:condpoints_markov}, is a GMRF of order 1. Let $f = [t,s]\in \mathcal{E}_{\mathcal{P}}$, and note that there exists $e\in\mathcal{E}$ such that $f\subset e$. Take any $t^*\in f$ and let 
	$$
	O = (t^*, s]\cup (\hbox{int }\mathcal{E}_{s,\mathcal{P}}) \cup (\hbox{int }\mathcal{E}_{\mathcal{P}}\setminus \mathcal{E}_{t,\mathcal{P}}). 
	$$
	Because we removed all multiple edges by considering only admissible points, we have that $\partial O = \{t^*\} \cup (\mathcal{P}\setminus\{t,s\})$. Furthermore, since $u_{t,s}(\cdot)$ is Markov of order 1, we have that 
	$$
	\sigma(u_{t,s}(w): w\in \{t^*\} \cup (\mathcal{P}\setminus\{t,s\}))
	$$ 
	splits $\sigma(X_{t,s}(w): w\in O)$ and $\sigma(X_{t,s}(w): w\in O^c)$. Observe that by Lemma~\ref{lem:condpoints_markov}, $u_{t,s}(w) = 0$ almost surely for $w\in \mathcal{P}\setminus \{t,s\}$. Therefore,
	$$
	\sigma(u_{t,s}(w): w\in \{t^*\} \cup (\mathcal{P}\setminus\{t,s\})) = \sigma(u_{t,s}(t^*)),
	$$
	whence, $u_{t,s}(t^*)$ splits $\sigma(u_{t,s}(w): w\in O)$ and $\sigma(u_{t,s}(w): w\in O^c)$. Thus, if $r_c(\cdot,\cdot)$ denotes the covariance function of $u_{t,s}(\cdot)$, we obtain from Lemma~\ref{lem:borisov_type} that for every $a\in O$ and $b\in O^c$
	$$
	r_c(t^*,t^*) r_c(a,b) = r_c(a,t^*) r_c(b,t^*).
	$$
	Further, since $t^*\in [t,s]$ was arbitrary, it follows that for every $t_1,t_2,t_3\in [t,s]$, we have
	$$
	r_c(t_1,t_2) r_c(t_2,t_3) = r_c(t_2,t_2) r_c(t_1,t_3).
	$$
	Therefore, by Lemma~\ref{lem:cov_func_nondeg}, we have that $r_c(t,s) > 0$. On the other hand, observe that
	\begin{align*}
		u(\{t,s\}) | u(\mathcal{P}\setminus \{t,s\}) &\sim \mathcal{N}\left(0, \begin{bmatrix}
			r_c(t,t) &  r_c(t,s) \\
			r_c(t,s) & r_c(s,s)
		   \end{bmatrix} \right) \\
		   &= \mathcal{N}\left(0, \frac{1}{Q_{tt}Q_{ss} - Q_{ts}^2} \begin{bmatrix}
			Q_{tt} & -Q_{ts} \\
			-Q_{ts} & Q_{ss}
			\end{bmatrix} \right).
	\end{align*}
	So, we have that
	$$
	0 < r_c(t,s) = \frac{-Q_{ts}}{Q_{tt}Q_{ss} - Q_{ts}^2} \Rightarrow Q_{ts} < 0.
	$$
	This concludes the proof.
\end{proof}

\section{Examples and consequences}\label{sec:illustration}
\subsection{Whittle--Mat\'ern fields on regular lattices}
As a specific example of a Gaussian Markov random field on a metric graph, consider the metric graph $\Gamma$ which is a regular lattice with $n$ vertices, where each edge is of length $\ell$. Let $u$ be a Whittle--Mat\'ern field, specified as the solution to
$$
(\kappa^2 - \Delta_{\Gamma})^{\alpha/2} (\tau u) = \mathcal{W}, \quad\text{on $\Gamma$}
$$
where $\mathcal{W}$ is Gaussian white noise and $\Delta_{\Gamma}$ denotes the Kirchhoff-Laplacian \cite{BSW2022}. Here, $\kappa, \tau >0$ and $\alpha>1/2$ are parameters which determine the marginal variances, practical correlation ranges, and sample path regularity of the solution. By \cite{BSW_Markov}, this is a centered GMRF of order 1 if $\alpha=1$, and by \cite[][Corollary 3]{BSW2023_AOS}, the process evaluated at the vertices is a random variable $x = (x_1, \ldots, x_n)^{\top}$ which has a centered multivariate Gaussian distribution with precision matrix $Q$, with non-zero elements 
\begin{align}
Q_{ij} &= 2\kappa \tau^2 \cdot \begin{cases}
d_i \left(\frac12 + \frac{e^{-2\kappa\ell}}{1-e^{-2\kappa\ell}}\right) & i=j\\
- \frac{e^{-\kappa\ell}}{1-e^{-2\kappa\ell}} & i\sim j
\end{cases}\notag \\
&= \frac{\kappa \tau^2}{\sinh(\kappa\ell)} \cdot \begin{cases}
	d_i \cosh(\kappa\ell) & i=j\\
	- 1 & i\sim j
	\end{cases}, \label{eq:Qwm}
\end{align}
where $d_i$ denotes the degree of vertex $i$. This means that we can view the model as a conditional autoregressive (CAR) model specified by 
$$
\mathbb{E}(x_i|x_{-i}) = - \sum_{j : j\sim i} \beta_{ij} x_j\quad\text{and}\quad
\text{Prec}(x_i|x_{-i}) = \kappa_i,
$$
where $x_{-i}$ denotes all components in $x$ except for $x_i$,  
$$
\beta_{ij} = \frac1{d_i\cosh(\kappa\ell)}, \quad\text{and}\quad \kappa_i = \frac{\kappa\tau^2 d_i}{\tanh(\kappa\ell)}.
$$
We can note that this conditional autoregression is different from a standard first-order CAR model that is often used in spatial statistics \cite[see, e.g.][]{lindgren11}, which would have 
\begin{equation}\label{eq:car}
	Q_{ij} 
	= \widetilde{\tau}^2 \cdot \begin{cases}
		a + d_i & i=j\\
		- 1 & i\sim j
		\end{cases}\\
	\end{equation}
	for some $a>0$. However, note that \eqref{eq:Qwm} corresponds to a proper CAR model as long as $\kappa\ell>0$ as we then have $\cosh(\kappa\ell)>1$, which means that $Q$ is diagonally dominant and therefore strictly positive definite \cite{rue2005gaussian}.
	Further, letting $\kappa\rightarrow 0$, the precision matrix \eqref{eq:Qwm} converges to a standard first-order intrinsic CAR model, obtained by setting $a=0$ in \eqref{eq:car}.

\subsection{Non-Markovianity of isotropic models}
An important consequence of the link between GMRFs of order 1 and graphical models is that, under a homogeneity assumption on the metric, there is an incompatibility between Markov property of order 1 and isotropy. This problem was addressed, and partially answered, in \cite[Theorem 3]{BSW_Markov}. In the present work, we extend these results by providing a complete characterization for all cases under the geodesic metric (see Remark \ref{rem:characterization_iso_markov_geo}) and by broadening the class of metric graphs covered for the resistance metric (see Remark \ref{rem:characterization_iso_markov_res}). To set the stage for these advancements, we first review the current state-of-the-art results in this area, which necessitates introducing some auxiliary definitions. 

We begin by reviewing the concept of the 1-sum of metric spaces, which naturally extends to the notion of the 1-sum of metric graphs.

\begin{definition}\label{def:1sum_spaces}
Let $(X_1, d_1)$ and $(X_2, d_2)$ be two metric spaces such that $X_1 \cap X_2 = \{x_0\}$. The 1-sum of $(X_1, d_1)$ and $(X_2, d_2)$, denoted by $(X_1 \cup X_2, d)$, is the metric space defined by:
\[
d(x, y) = 
\begin{cases} 
d_1(x, y), & \text{if } x, y \in X_1, \\
d_2(x, y), & \text{if } x, y \in X_2, \\
d_1(x, x_0) + d_2(x_0, y), & \text{if } x \in X_1 \text{ and } y \in X_2.
\end{cases}
\]
\end{definition}

By iterating this construction, we can define $k$-step 1-sums of metric graphs.

\begin{definition}\label{def:kstep_1sum}
Given an ordered collection of metric graphs $\Gamma_1, \ldots, \Gamma_k$, where $k \in \mathbb{N}$, suppose there exist points $v_2 \in \Gamma_2, \ldots, v_k \in \Gamma_k$ such that:
\[
\Gamma_1 \cap \Gamma_2 = \{v_1\}, \quad (\Gamma_1 \cup \Gamma_2) \cap \Gamma_3 = \{v_3\}, \quad \ldots, \quad (\Gamma_1 \cup \cdots \cup \Gamma_{k-1}) \cap \Gamma_k = \{v_k\}.
\]
Let $\widetilde{\Gamma}_2$ denote the 1-sum of $\Gamma_1$ and $\Gamma_2$, and define $\widetilde{\Gamma}_3$ as the 1-sum of $\widetilde{\Gamma}_2$ and $\Gamma_3$. This process is repeated iteratively, so that $\widetilde{\Gamma}_k$ is the 1-sum of $\widetilde{\Gamma}_{k-1}$ and $\Gamma_k$. We refer to $\widetilde{\Gamma}_k$ as the $k$-step 1-sum of $\Gamma_1, \ldots, \Gamma_k$, with the intersecting points $\{v_2, \ldots, v_k\}$.
\end{definition}

In \cite[Theorem 3]{BSW_Markov}, the following assumption was introduced, leading to the subsequent theorem:

\begin{assumption}\label{assump:iso_markov}
	Let $S$ be a Euclidean cycle, and let $k_1, k_2 \in \mathbb{N}$. Consider a collection of $k_1 + k_2 - 2$ metric graphs $\Gamma_1, \ldots, \Gamma_{k_1+k_2-2}$ with Euclidean edges, where $\Gamma_i \cap \Gamma_j = \emptyset$ for $i \neq j$. Furthermore, let $T$ be either a Euclidean cycle with a length distinct from that of $S$, or an edge. The metric graph $\Gamma$ is then defined as the $(k_1+k_2)$-step 1-sum of $S, \Gamma_1, \ldots, \Gamma_{k_1-1}, T, \Gamma_{k_1}, \ldots, \Gamma_{k_1+k_2-2}$.
\end{assumption}

\begin{theorem}[Theorem 3, \cite{BSW_Markov}]\label{thm:markov_resistance}
	Let $\Gamma$ be a metric graph satisfying Assumption~\ref{assump:iso_markov}. Suppose $X(\cdot)$ is a Gaussian random field (GRF) on $\Gamma$ with an isotropic covariance function $\rho(s,t) = r(\widetilde{d}(s,t))$, where $r(\cdot)$ is continuous and $\widetilde{d}(s,t)$ represents either the resistance metric or the geodesic metric. Then, $X$ is not Markov of order 1.
\end{theorem}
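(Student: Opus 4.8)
\emph{Proof strategy.} My plan is to argue by contradiction: assume $X$ is Markov of order $1$. We may assume its covariance $\rho(s,t)=r(\widetilde{d}(s,t))$ is strictly positive definite, so that $r(0)>0$, $\rho(s,t)<r(0)$ for $s\neq t$, and Corollary~\ref{cor:faithfulness} applies. By Assumption~\ref{assump:iso_markov}, $\Gamma$ contains a Euclidean cycle $S$ of circumference $L$ together with a point $o\in S$ at which a subgraph $\Gamma_o$ of positive diameter is glued, so that $\Gamma_o$ meets the rest of $\Gamma$ only at $o$. The argument then splits into two steps, and I expect the first to be the delicate one.

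\emph{Step 1: $r$ is exponential on the distances occurring inside $S$.} I would apply the Markov property (Definition~\ref{def:MarkovPropertyFieldOrderP} with $p=1$) to the open set $O=\Gamma_o\setminus\{o\}$, whose boundary in $\Gamma$ is $\{o\}$: this gives that $\sigma(X(o))$ splits the field on $\Gamma_o$ from the field on $(\Gamma\setminus\Gamma_o)\cup\{o\}\supseteq S$, hence $X(a)\perp X(b)\mid X(o)$ for all $a\in S$ and $b\in\Gamma_o$. The computation in the proof of Lemma~\ref{lem:borisov_type} then yields $r(0)\,r(\widetilde{d}(a,b))=r(\widetilde{d}(a,o))\,r(\widetilde{d}(o,b))$, and since all paths from $a$ to $b$ pass through $o$ we have $\widetilde{d}(a,b)=\widetilde{d}(a,o)+\widetilde{d}(o,b)$ (additivity of the geodesic distance, and likewise of the effective resistance, across $o$). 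As $a$ ranges over $S$, $\widetilde{d}(a,o)$ sweeps the interval $[0,D_S]$, where $D_S$ is the diameter of $(S,\widetilde{d})$ ($L/2$ for the geodesic metric and $L/4$ for the resistance metric), and by the intermediate value property (Definition~\ref{def:intermediatevalue}) applied along an edge of $\Gamma_o$ at $o$, $\widetilde{d}(o,b)$ sweeps a nondegenerate interval. So $g:=r/r(0)$ satisfies $g(\alpha+\beta)=g(\alpha)g(\beta)$ on a product of intervals, which by continuity forces $g(t)=e^{-\kappa t}$ on $[0,D_S]$; here $\kappa\ge 0$ since $|r|\le r(0)$, and $\kappa>0$ by strict positive definiteness. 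Since every pairwise distance among points of $S$ lies in $[0,D_S]$, this determines the covariance on all of $S$.

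\emph{Step 2: this contradicts faithfulness.} I would take four distinct, equally spaced points $a,b,c,d$ around $S$ and extend $\{a,b,c,d\}\cup\mathcal{V}$ to an admissible set $\mathcal{P}$ in the sense of Definition~\ref{def:admissible_points} (always possible, splitting any multiple edge by adding an interior point). By Step 1 the $4\times4$ covariance matrix $\Sigma$ of $(X(a),X(b),X(c),X(d))$ is the invertible symmetric circulant with first row $r(0)(1,\rho_1,\rho_2,\rho_1)$, where $\rho_1=e^{-\kappa\widetilde{d}(a,b)}\in(0,1)$, $\rho_2=e^{-\kappa\widetilde{d}(a,c)}$, and $\widetilde{d}(a,c)=\theta\,\widetilde{d}(a,b)$ for a fixed $\theta$ depending only on the metric ($\theta=2$ for the geodesic, $\theta=4/3$ for the resistance metric), so $\rho_2=\rho_1^{\theta}$. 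Deleting $a$ and $c$ from $\Gamma_{\mathcal{P}}$ cuts the cycle coming from $S$ into two arcs, one containing $b$ and one containing $d$; any $b$--$d$ path that leaves this cycle must re-enter it at the same gluing point through which it left, hence cannot switch arcs, so every $b$--$d$ path meets $\{a,c\}$. Thus $\{a,c\}$ separates $b$ from $d$ in $\Gamma_{\mathcal{P}}$, and Corollary~\ref{cor:faithfulness} forces $X(b)\perp X(d)\mid X(a),X(c)$, i.e. $[\Sigma^{-1}]_{bd}=0$. Diagonalising the circulant, $[\Sigma^{-1}]_{bd}=0$ is equivalent to $2\rho_1^2=\rho_2(1+\rho_2)$, and substituting $\rho_2=\rho_1^{\theta}$ forces $\rho_1=1$, i.e. $\kappa=0$ --- contradicting Step 1. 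The theorem follows by contraposition.

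\emph{Where the difficulty lies.} I expect Step 1 to be the main obstacle: one must confirm additivity of both metrics across the gluing point, justify that the two arguments of $r$ genuinely range over intervals (this is precisely where the regularity and intermediate-value hypotheses on the metric enter), and solve the multiplicative functional equation assuming only continuity of $r$. Step 2 is then a short and robust linear-algebra computation. I would also remark that the argument uses considerably less than Assumption~\ref{assump:iso_markov}: it needs only that $\Gamma$ contains a cycle carrying a pendant subgraph of positive diameter, which is why essentially the same scheme underlies the sharper characterisations recorded in the remarks following the theorem.
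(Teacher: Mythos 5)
Your argument is essentially correct, but note that the paper itself does not prove this statement: Theorem~\ref{thm:markov_resistance} is quoted from \cite{BSW_Markov}, and the paper's own result in this direction is Corollary~\ref{cor:incomp_iso_markov}, whose proof is genuinely different from yours. There the contradiction is obtained qualitatively: homogeneity of the metric (Definition~\ref{def:homogeneous_metric}) supplies two four-point configurations with identical distance matrices, one lying inside a single edge and one straddling the cut vertex between $e_1$ and $e_2$; the order-1 Markov property yields a conditional independence for the first configuration, isotropy plus Gaussianity transports it verbatim to the second, and Corollary~\ref{cor:faithfulness} forbids it because $\{s_1,s_2\}$ does not separate $v$ from $s_3$. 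Your route also hinges on Corollary~\ref{cor:faithfulness} (no circularity: that corollary is proved via MTP$_2$ independently of isotropy), but the mechanism is quantitative: you first identify $r$ as an exponential on the cycle by combining the splitting at the cut vertex $o$ with additivity of both metrics across $o$ (both facts are correct --- pendant components attached at a single point neither shorten geodesics nor alter effective resistances, and resistances add in series across a cut vertex) and a multiplicative Cauchy equation, and you then contradict faithfulness through an explicit $4\times 4$ circulant computation; I checked that $[\Sigma^{-1}]_{bd}=0$ is equivalent to $2\rho_1^2=\rho_2(1+\rho_2)$ and that $\theta=2$ (geodesic) and $\theta=4/3$ (resistance) both force $\rho_1=1$, so Step 2 is sound, and the separation of $b$ and $d$ by $\{a,c\}$ in $\Gamma_{\mathcal{P}}$ follows from the $1$-sum (cut-vertex) structure as you say. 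What your approach buys is what you note at the end: it only needs a cycle carrying a pendant piece, not the full Assumption~\ref{assump:iso_markov} (in particular no restriction that the two cycles have different lengths), so it essentially re-derives the geodesic/resistance instances of Corollary~\ref{cor:incomp_iso_markov} by explicit computation; what the paper's homogeneity argument buys is that it never needs to determine $r$, so it covers any metric satisfying Definition~\ref{def:homogeneous_metric}.

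One caveat: the statement as quoted does not assume strict positive definiteness, and your opening ``we may assume'' is really an additional hypothesis rather than a harmless reduction --- without some nondegeneracy, a constant field ($r\equiv\sigma^2$) is isotropic and Markov of order 1, so the literal statement needs this assumption anyway (the paper adds it explicitly in Corollary~\ref{cor:incomp_iso_markov}); state it as a hypothesis you are using.
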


In this work we are able to prove this incompatibility result for any metric defined on a compact metric graph such that the following homogeneous condition holds:

\begin{definition}\label{def:homogeneous_metric}
	Let $\Gamma$ be a compact metric graph endowed with a metric $d(\cdot,\cdot)$ that satisfies the intermediate value property, is regular and connected in the sense of Definition~\ref{def:intermediatevalue}. Given two edges $e_1,e_2$ of $\Gamma$ such that $e_1\cap e_2 = \{v\}$, we say that the metric is homogeneous on $e_1$ and $e_2$  if there exist $t_1,t_2,t_3\in e_1$ and $s_1\in e_1, s_2,s_3\in e_2$ such that the distance matrix of $\{t_1,t_2,t_3,v\}$ coincides with the distance matrix of $\{s_1,v,s_2,s_3\}$. 
\end{definition}

\begin{remark}
	The geodesic metric is homogeneous for any pair of edges $e_1, e_2$ such that $e_1 \cap e_2 = \{v\}$ on a compact metric graph. Further, let $S = S_1 \cup S_2$ denote the 1-sum of two Euclidean cycles. Consider a collection of $k - 1$ metric graphs $\Gamma_1, \ldots, \Gamma_{k-1}$ with Euclidean edges, where $\Gamma_i \cap \Gamma_j = \emptyset$ for $i \neq j$, and let $\Gamma$ be the $k$-step 1-sum of the metric graphs $S, \Gamma_1, \ldots, \Gamma_{k-1}$. Then, given edges $e_1 \in S_1$ and $e_2 \in S_2$ such that $e_1 \cap e_2 = \{v\}$, the resistance metric is also homogeneous for $e_1$ and $e_2$ on the metric graph $\Gamma$.
\end{remark}

The following corollary is the main result of this section:

\begin{corollary}\label{cor:incomp_iso_markov}
	Let $\Gamma$ be a compact and connected metric graph that properly contains a cycle (i.e., $\Gamma$ contains a cycle but is not a cycle). Let $e_1, e_2 \in \mathcal{E}$ be such that $e_2$ is in a cycle whereas $e_1$ is not on the cycle that contains $e_2$ and $e_1\cap e_2 = \{v\}$. Suppose that the metric $d(\cdot,\cdot)$ is homogeneous on $e_1$ and $e_2$. Let, now, $u$ be a GRF on $\Gamma$ with a continuous and strictly positive definite covariance function. Then, $u$ cannot be simultaneously isotropic and Markov of order~1.
\end{corollary}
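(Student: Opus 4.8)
The plan is to argue by contradiction. Assume $u$ is simultaneously isotropic, say $r(x,y)=\rho(d(x,y))$ for a continuous isotropic $\rho$, and Markov of order $1$. The regularity of $d$ (intermediate value property, regular, connected) is built into the homogeneity hypothesis, so Theorem~\ref{thm:explicit_link} and Corollary~\ref{cor:faithfulness} apply: for every admissible $\mathcal{P}$, the law of $u(\mathcal{P})$ is MTP$_2$ and faithful to $\Gamma_{\mathcal{P}}$, i.e. for $a,b\in\mathcal{P}$ and $S\subset\mathcal{P}$ one has $u(a)\perp u(b)\mid\{u(w):w\in S\}$ iff $S$ separates $a$ and $b$ in $\Gamma_{\mathcal{P}}$. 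Take the points $t_1,t_2,t_3\in e_1$ and $s_1\in e_1,\ s_2,s_3\in e_2$ given by homogeneity of $d$ on $e_1,e_2$; discarding degenerate configurations (which do not arise for the geodesic or resistance metrics) we may assume these points together with $v$ are pairwise distinct. Since the ordered quadruples $(t_1,t_2,t_3,v)$ and $(s_1,v,s_2,s_3)$ have the same distance matrix, isotropy forces $(u(t_1),u(t_2),u(t_3),u(v))$ and $(u(s_1),u(v),u(s_2),u(s_3))$ to be equal in law, both $\mathcal{N}(\mathbf 0,M)$ for a common $4\times4$ matrix $M$. Hence any conditional‑independence statement among the coordinates of one quadruple (equivalently, any multiplicative relation between entries of $M$) transfers to the other under the positional correspondence $t_1\leftrightarrow s_1,\ t_2\leftrightarrow v,\ t_3\leftrightarrow s_2,\ v\leftrightarrow s_3$. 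The goal is to extract, from the Markov property in one configuration, such a relation whose transfer to the other configuration contradicts faithfulness.

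\textbf{The graph‑theoretic input.} The one fact I need is that, since $e_1$ is not contained in any cycle through $e_2$, the vertex $v$ separates $\mathrm{int}(e_1)$ from $\mathrm{int}(e_2)$ in $\Gamma$: a path joining a point of $\mathrm{int}(e_1)$ to a point of $\mathrm{int}(e_2)$ while avoiding $v$ would have to run from the other endpoint $v'$ of $e_1$ to the other endpoint $v''$ of $e_2$, and concatenating a simple such path with $e_1$ and $e_2$ yields a cycle containing both $e_1$ and $e_2$ (loops are excluded from the start). I would then split according to whether $e_1$ is a bridge of $\Gamma$.

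\textbf{The two cases.} If $e_1$ is not a bridge it lies on a cycle $C'\neq C$, so there is a path from $v'$ to $v$ in $\Gamma$ avoiding $\mathrm{int}(e_1)$; joining it to the two sub‑arcs of $e_1$ between $t_1$ and $v'$ and between $v$ and $t_3$ gives a walk from $t_1$ to $t_3$ avoiding $t_2$, so for any admissible $\mathcal{P}\supset\{t_1,t_2,t_3\}$ the set $\{t_2\}$ does not separate $t_1$ from $t_3$ in $\Gamma_{\mathcal{P}}$, whence $u(t_1)\not\perp u(t_3)\mid u(t_2)$ by faithfulness. But $v$‑separation, applied to the component of $\Gamma\setminus\{v\}$ containing $\mathrm{int}(e_1)$ (whose boundary is $\{v\}$), together with order‑$1$ Markovianity gives $u(s_1)\perp u(s_2)\mid u(v)$, i.e. ``first $\perp$ third given second'' in $(s_1,v,s_2,s_3)$; transferring this to $(t_1,t_2,t_3,v)$ yields $u(t_1)\perp u(t_3)\mid u(t_2)$ — a contradiction. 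If instead $e_1$ is a bridge, then $d$ restricted to $e_1$ is the arc‑length metric (true for the geodesic and resistance distances on a bridge), so the distance matrix of $\{t_1,t_2,t_3,v\}$ fixes their order along $e_1$; let $t_m\in\{t_1,t_3\}$ be the one strictly between $v$ and $t_2$. Removing the interior point $t_m$ disconnects $\Gamma$ into a part containing $v$ and a part containing $t_2$, so Lemma~\ref{lem:borisov_type} gives $r(t_m,t_m)\,r(v,t_2)=r(v,t_m)\,r(t_m,t_2)$, i.e. $u(v)\perp u(t_2)\mid u(t_m)$; under the correspondence this is ``$u(v)\perp u(s_3)\mid u(s_\bullet)$'' in $(s_1,v,s_2,s_3)$, with $s_\bullet=s_2$ if $t_m=t_3$ and $s_\bullet=s_1$ if $t_m=t_1$. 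However $\{s_\bullet\}$ does not separate $v$ from $s_3$ in $\Gamma_{\mathcal{P}}$ for any admissible $\mathcal{P}\supset\{s_1,s_2,s_3,v\}$: if $s_\bullet=s_1\in\mathrm{int}(e_1)$ then $v$ and $s_3$ are joined along the edge $e_2$; if $s_\bullet=s_2\in\mathrm{int}(e_2)$ then, since $e_2$ lies on $C$, there is a route from $v$ to $s_3$ around $C$ avoiding $s_2$. By faithfulness $u(v)\not\perp u(s_3)\mid u(s_\bullet)$, again a contradiction. Passing to an admissible superset of a given finite point set is always possible and changes none of the (non‑)separations used, so this exhausts all cases.

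\textbf{Main obstacle.} The delicate part is the graph‑theoretic bookkeeping of the last two paragraphs: verifying that $v$ separates $\mathrm{int}(e_1)$ from $\mathrm{int}(e_2)$, and, in each case, pinpointing a triple among the six distinguished points whose ``middle'' vertex fails to separate the outer two in $\Gamma_{\mathcal{P}}$ while matching, positionally, a conditional independence that the Markov property forces — directly from $v$‑separation in the non‑bridge case, and through the single‑point splitting identity of Lemma~\ref{lem:borisov_type} on the bridge $e_1$ in the bridge case. Everything else (the isotropy transfer, which is immediate once equality of distance matrices is seen to force equality in law of the two Gaussian quadruples; existence of admissible enlargements; exclusion of degenerate and loop configurations) is routine.
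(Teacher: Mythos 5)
Your Case 2 (the bridge case) is essentially sound, and your transfer-of-law argument via matching distance matrices and the appeal to Corollary~\ref{cor:faithfulness} are exactly the right tools. The gap is in Case 1. There you derive $u(s_1)\perp u(s_2)\mid u(v)$ from the claim that $v$ separates $\mathrm{int}(e_1)$ from $\mathrm{int}(e_2)$, which you justify by asserting that ``$e_1$ is not contained in any cycle through $e_2$.'' That is a strictly stronger hypothesis than the corollary's: the statement only requires that \emph{some} cycle contains $e_2$ and that $e_1$ is not on \emph{that} cycle; it does not exclude $e_1$ and $e_2$ lying together on a different cycle. This weaker reading is the intended one --- e.g.\ in the square-with-diagonal graph and the grid graph of Figure~\ref{fig:graphs} (which Remark~\ref{rem:characterization_iso_markov_geo} explicitly claims are covered), every pair of adjacent edges lies on a common cycle, so under your reading the corollary would apply to neither. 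In such configurations $v$ does not separate $\mathrm{int}(e_1)$ from $\mathrm{int}(e_2)$; in fact, by the very faithfulness you invoke, $u(s_1)\perp u(s_2)\mid u(v)$ is then \emph{false}, so the conditional independence you need in Case 1 is unavailable and no contradiction follows. Your case split thus covers only bridges $e_1$ and the special non-bridge situation where $e_1$ shares no cycle with $e_2$, leaving out precisely the graphs the result is meant to newly handle.

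The paper avoids this by never conditioning on the single point $v$: since $t_2$ is an interior point of $e_1$ flanked by $t_1$ and $t_3$, the set $\{t_1,t_3\}$ always separates $t_2$ from $v$ (regardless of whether $e_1$ is a bridge or lies on cycles), so Markovianity of order 1 gives $u(t_2)\perp u(v)\mid\{u(t_1),u(t_3)\}$ unconditionally. Transferring this by isotropy under the correspondence of the two quadruples yields $u(v)\perp u(s_3)\mid\{u(s_1),u(s_2)\}$, which contradicts faithfulness because the cycle through $e_2$ avoiding $e_1$ provides a path from $v$ to $s_3$ missing both $s_1$ and $s_2$. This single two-point conditioning makes your case distinction unnecessary and closes the gap; your bridge-case argument is a correct but weaker variant of it.
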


\begin{proof}
	Assume, by contradiction, that $u$ is both isotropic and Markov of order 1. Let $v, e_1$ and $e_2$ be as in the statement of this corollary. By homogeneity, there exist $t_1,t_2,t_3\in e_1$ and $s_1\in e_1, s_2,s_3\in e_2$ such that the distance matrices of $\{t_1,t_2,t_3,v\}$ and $\{s_1,v,s_2,s_3\}$ coincide. Assume, without loss of generality, that these points are ordered in $e_1$ as ${t_1 < t_2 < t_3 < v}$, and on $e_2$ as $v < s_2 < s_3$. 
	Now, since $u$ is a GMRF of order 1, we have that $u(t_2)\perp u(v) | \{u(t_1), u(t_3)\}$. By isotropy and Gaussianity, it follows that $u(v) \perp u(s_3) | \{u(s_1), u(s_2)\}$. However, in view of Corollary~\ref{cor:faithfulness}, this is a contradiction since $\{s_1,s_2\}$ does not separate $v$ and $s_3$, as there is a path along the cycle connecting them. More precisely, Corollary \ref{cor:faithfulness} implies that $\{u(p): p\in\mathcal{P}\}$, where $\mathcal{P}$ is any admissible set of points containing $s_1,s_2,s_3$ is faithful to its pairwise independence graph, which in turns implies that $u(v)$ and $u(s_3)$ must be conditionally dependent given $u(s_1)$ and $u(s_2)$, since $\{s_1,s_2\}$ does not separate $\{v,s_3\}$. This contradiction proves the result.
\end{proof}

The following remark summarizes the impact of Corollary \ref{cor:incomp_iso_markov} on compact metric graphs endowed with geodesic metrics:

\begin{remark}\label{rem:characterization_iso_markov_geo}
	Observe that if $\Gamma$ is a compact metric graph endowed with the geodesic metric, then $\Gamma$ is not covered by Corollary \ref{cor:incomp_iso_markov} if, and only if, $\Gamma$ is either a tree or a cycle. However, it is known from \cite[Theorem 2]{BSW_Markov}, that if $\Gamma$ is a cycle, there exists a Gaussian Markov random field of order 1 with isotropic covariance function, and it is known from \cite{moller2022lgcp} that a Gaussian field on trees with isotropic exponential covariance function is Markov of order 1. Therefore, Corollary \ref{cor:incomp_iso_markov} is sharp for the geodesic metric and cannot be further improved. Thus providing a major extension of \cite[Theorem 3]{BSW_Markov} in the case of geodesic metric. Examples of graphs where no fields exists which are both Markov and isotropic in the geodesic metric are given in Figure~\ref{fig:graphs}. These examples are also examples of graphs which were not covered in \citep[][Theorem~3]{BSW_Markov}.
\end{remark}

With regard to the resistance metric, we have the following observation:

\begin{remark}\label{rem:characterization_iso_markov_res}
	If $\Gamma$ is a compact metric graph equipped with the resistance metric, it is more difficult to verify the homogeneity property required in Corollary \ref{cor:incomp_iso_markov}. However, the corollary at least allows us to remove the constraint in Theorem \ref{thm:markov_resistance} that the Euclidean cycles must have different lengths.
\end{remark}

\begin{figure}
	\definecolor{nodecolor}{rgb}{0.8,0.8,0.8}
  \centering
  \begin{center}
	\begin{tikzpicture}[scale=0.5] 
		\draw[thick] (0,0) circle (2);
		\draw[thick] (-4,0) circle (2);
		\draw [fill = nodecolor] (-6,0) circle (8pt);
		\draw [fill = nodecolor] (-2,0) circle (8pt);

		\draw[thick] (4,2) -- (8,2);
		\draw[thick] (4,-2) -- (8,-2);
		\draw[thick] (4,2) -- (4,-2);
		\draw[thick] (8,-2) -- (8,2);
		\draw[thick] (4,-2) -- (8,2);
		\draw [fill = nodecolor] (4,2) circle (8pt);
		\draw [fill = nodecolor] (8,2) circle (8pt);
		\draw [fill = nodecolor] (4,-2) circle (8pt);
		\draw [fill = nodecolor] (8,-2) circle (8pt);

		\draw[thick] (10,2) -- (16,2);
		\draw[thick] (10,-2) -- (16,-2);
		\draw[thick] (10,0) -- (16,0);
		\draw[thick] (10,2) -- (10,-2);
		\draw[thick] (12,-2) -- (12,2);
		\draw[thick] (14,-2) -- (14,2);
		\draw[thick] (16,-2) -- (16,2);
		\draw [fill = nodecolor] (10,2) circle (8pt);
		\draw [fill = nodecolor] (12,2) circle (8pt);
		\draw [fill = nodecolor] (14,2) circle (8pt);
		\draw [fill = nodecolor] (16,2) circle (8pt);
		\draw [fill = nodecolor] (10,0) circle (8pt);
		\draw [fill = nodecolor] (12,0) circle (8pt);
		\draw [fill = nodecolor] (14,0) circle (8pt);
		\draw [fill = nodecolor] (16,0) circle (8pt);
		\draw [fill = nodecolor] (10,-2) circle (8pt);
		\draw [fill = nodecolor] (12,-2) circle (8pt);
		\draw [fill = nodecolor] (14,-2) circle (8pt);
		\draw [fill = nodecolor] (16,-2) circle (8pt);
	\end{tikzpicture}
\end{center}
	\caption{Examples of graphs where no random fields exist which are both Markov of order 1 and isotropic for the geodesic metric. 
	}
	\label{fig:graphs}
	\end{figure}
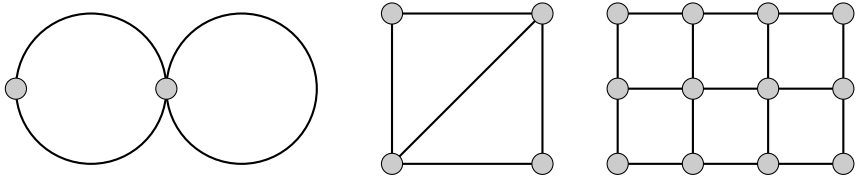

	To illustrate this result, consider a metric graph consisting of two cycles with 7 vertices, where one vertex is shared by both cycles (see Figure \ref{fig:tadpole}), and denote this graph by $\Gamma$. Unlike the leftmost metric graph in Figure \ref{fig:graphs}, $\Gamma$ has Euclidean edges, as defined in \cite{anderes2020isotropic}. Note also that this graph does not satisfy Assumption \ref{assump:iso_markov}, which is necessary for the application of \cite[Theorem~3]{BSW_Markov}. Therefore, we will examine a Gaussian field with an exponential isotropic covariance function defined either in terms of the resistance metric or the geodesic metric on $\Gamma$.

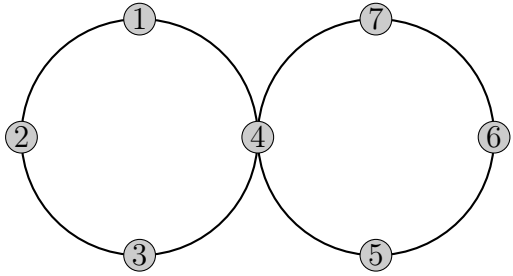
\begin{figure}
	\definecolor{nodecolor}{rgb}{0.8,0.8,0.8}
  \centering
  \begin{center}
	\begin{tikzpicture}[scale=0.5] 
		\draw[thick] (pi,0) circle (pi);
		\draw[thick] (-pi,0) circle (pi);
		\draw [fill = nodecolor] (-pi,pi) circle (12pt);
		\node[align=center] at (-pi,pi) {1};
		\draw [fill = nodecolor] (-pi,-pi) circle (12pt);
		\node[align=center] at (-pi,-pi) {3};
		\draw [fill = nodecolor] (0,0) circle (12pt);
		\node[align=center] at (0,0) {4};
		\draw [fill = nodecolor] (-2*pi,0) circle (12pt);
		\node[align=center] at (-2*pi,0) {2};
		\draw [fill = nodecolor] (pi,pi) circle (12pt);
		\node[align=center] at (pi,pi) {7};
		\draw [fill = nodecolor] (pi,-pi) circle (12pt);
		\node[align=center] at (pi,-pi) {5};
		\draw [fill = nodecolor] (2*pi,0) circle (12pt);
		\node[align=center] at (2*pi,0) {6};
	\end{tikzpicture}
\end{center}
	\caption{A 2-cycle graph with 7 vertices and 8 edges, each of length 1. This graph also represents the pairwise independence graph of the vertex process for the Whittle--Matérn model with $\alpha=1$ defined on this metric graph.}
	\label{fig:tadpole}
	\end{figure}

	Specifically, let $d_R(\cdot,\cdot)$ be the resistance metric on $\Gamma$, let $d(\cdot, \cdot)$ be the geodesic metric, and let $\rho_{\exp}(\cdot)$ be the exponential covariance function given by
	\begin{equation}\label{eq:expcov}
	\rho_{\exp}(h) = \sigma^2 \exp\{-\kappa h\},
	\end{equation}
	where $\kappa>0$ is related to the correlation range of the process and $\sigma>0$ is the (marginal) standard deviation. Thus, the first field we will consider on $\Gamma$ is $u_{\exp}$ given by a Gaussian field with covariance function $\varrho_{res}(s_1,s_2) = \rho_{\exp}(d_R(s_1,s_2))$ or $\varrho_{geo}(s_1,s_2) = \rho_{\exp}(d(s_1,s_2))$. 

	If this process was markov with the dependence structure given by the graph in Figure~\ref{fig:tadpole}, 
	the precision matrix of the process evaluated in the vertices should have the following non-zero structure
	$$
	\begin{bmatrix}
	x & x & 0 & x & 0 & 0 & 0\\
	x & x & x & 0 & 0 & 0 & 0\\
	0 & x & x & x & 0 & 0 & 0\\
	x & 0 & x & x & x & 0 & x\\
	0 & 0 & 0 & x & x & x & 0\\
	0 & 0 & 0 & 0 & x & x & x\\
	0 & 0 & 0 & x & 0 & x & x\\
	\end{bmatrix}
	$$
	where \( x \) denotes a non-zero element. By evaluating the precision matrix corresponding to \eqref{eq:expcov} with \(\sigma > 0\) and \(\kappa > 0\), and applying either the resistance metric or the geodesic metric, we obtain (using Mathematica \cite{Mathematica}) a precision matrix of the form 
$$
Q = \frac{1}{\sigma^2}
\begin{bmatrix}
q_1 & q_2 & q_3 & q_2 & 0 & 0 & 0 \\
q_2 & q_1 & q_2 & q_3 & 0 & 0 & 0 \\
q_3 & q_2 & q_1 & q_2 & 0 & 0 & 0 \\
q_2 & q_3 & q_2 & q_4 & q_2 & q_3 & q_2 \\
0 & 0 & 0 & q_2 & q_1 & q_2 & q_3 \\
0 & 0 & 0 & q_3 & q_2 & q_1 & q_2 \\
0 & 0 & 0 & q_2 & q_3 & q_2 & q_1 \\
\end{bmatrix},
$$
where, for the resistance metric,
\begin{align*}
q_1 &= \frac{e^{3 \kappa /2} \left(e^{\kappa /2} + e^{\kappa} + 2\right)}{\left(-3 e^{\kappa /2} - 2 e^{\kappa} + 2 e^{3 \kappa /2} + e^{2 \kappa} + e^{5 \kappa /2} + 1\right) },\\
q_2 &= -\frac{e^{5 \kappa /4}}{\left(-4 e^{\kappa /2} + 2 e^{\kappa} + e^{2 \kappa} + 1\right) },\\
q_3 &= -\frac{e^{\kappa}}{\left(2 e^{\kappa /2} + 4 e^{\kappa} + 2 e^{3 \kappa /2} + e^{2 \kappa} - 1\right) },\\
q_4 &= \frac{3 e^{\kappa /2} + 2 e^{\kappa} + 2 e^{3 \kappa /2} + e^{2 \kappa} + e^{5 \kappa /2} - 1}{\left(-3 e^{\kappa /2} - 2 e^{\kappa} + 2 e^{3 \kappa /2} + e^{2 \kappa} + e^{5 \kappa /2} + 1\right) }.
\end{align*}
and for the geodesic metric, 
$$
\begin{array}{ll}
    q_1 &= \dfrac{e^{4 \kappa }}{\left(e^{2 \kappa }-1\right)^2 },
    q_2 = -\dfrac{e^{3 \kappa }}{\left(e^{2 \kappa }-1\right)^2 }, \\[15pt]
    q_3 &= \dfrac{e^{2 \kappa }}{\left(e^{2 \kappa }-1\right)^2 }, 
    q_4 = \dfrac{2 e^{2 \kappa } + e^{4 \kappa } - 1}{\left(e^{2 \kappa }-1\right)^2 }.
\end{array}
	  $$
	In both cases, as $q_3\neq 0$, the model has a sparsity structure given by the graph in Figure~\ref{fig:graph2}, which is different from the original graph and thus confirms the theoretical results in that it does not have the Markov property. Thus, while the isotropic models exhibit some degree of conditional independence in this example, they are not Markov random fields of order 1.
	
	\begin{figure}
		\definecolor{nodecolor}{rgb}{0.8,0.8,0.8}
	  \centering
	  \begin{center}
		\begin{tikzpicture}[scale=0.5] 
			\draw[thick] (-3*pi,0) -- (-pi,0)  {};
			\draw[thick] (-pi,0) -- (0,pi)  {};
			\draw[thick] (-pi,0) -- (0,-pi)  {};
			\draw[thick] (0,pi) -- (pi,0)  {};
			\draw[thick] (0,-pi) -- (pi,0)  {};
			\draw[thick] (0,-pi) -- (0,-0.3)  {};
			\draw[thick] (0,0.3) -- (0,pi)  {};
			\draw[thick] (-pi,0) -- (pi,0)  {};
			\draw[thick] (-2*pi,pi) -- (-pi,0)  {};
			\draw[thick] (-2*pi,pi) -- (-3*pi,0)  {};
			\draw[thick] (-2*pi,-pi) -- (-pi,0)  {};
			\draw[thick] (-2*pi,-pi) -- (-3*pi,0)  {};
			\draw[thick, bend left=90] (0,-0.3) to (0,0.3)  {};
			\draw[thick] (-2*pi,pi) -- (-2*pi,0.3)  {};
			\draw[thick] (-2*pi,-pi) -- (-2*pi,-0.3)  {};
			\draw[thick, bend left=90] (-2*pi,-0.3) to (-2*pi,0.3)  {};
			\draw [fill = nodecolor] (-3*pi,0) circle (12pt);
			\node[align=center] at (-3*pi,0) {2};
			\draw [fill = nodecolor] (-pi,0) circle (12pt);
			\node[align=center] at (-pi,0) {4};
			\draw [fill = nodecolor] (0,pi) circle (12pt);
			\node[align=center] at (0,pi) {7};
			\draw [fill = nodecolor] (0,-pi) circle (12pt);
			\node[align=center] at (0, -pi) {5};
			\draw [fill = nodecolor] (pi,0) circle (12pt);
			\node[align=center] at (pi,0) {6};
			\draw [fill = nodecolor] (-2*pi,pi) circle (12pt);
			\node[align=center] at (-2*pi,pi) {1};
			\draw [fill = nodecolor] (-2*pi,-pi) circle (12pt);
			\node[align=center] at (-2*pi,-pi) {3};
		\end{tikzpicture}
	\end{center}
		\caption{The pairwise independence graph of the isotropic models on the 2-cycle graph.}
		\label{fig:graph2}
		\end{figure}
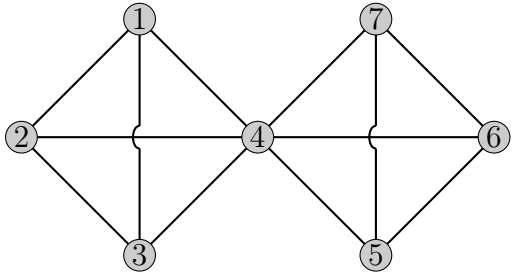	
	
	On the other hand, considering the distribution of the Whittle--Mat\'ern field with $\alpha = 1$, we obtain the precision matrix
	$$
	Q = \frac{\kappa\tau^2}{\sinh(\kappa)}	\begin{bmatrix}
		2b_\kappa & -1 & 0 & -1 & 0 & 0 & 0\\
		-1 & 2b_\kappa & -1 & 0 & 0 & 0 & 0\\
		0 & -1 & 2b_\kappa & -1 & 0 & 0 & 0\\
		-1 & 0 & -1 & 4b_\kappa & -1 & 0 & -1\\
		0 & 0 & 0 & -1 & 2b_\kappa & -1 & 0\\
		0 & 0 & 0 & 0 & -1 & 2b_\kappa & -1\\
		0 & 0 & 0 & -1 & 0 & -1 & 2b_\kappa\\
		\end{bmatrix},
	$$
	where $b_\kappa = \cosh(\kappa)$, 
	which thus has the correct sparsity structure for any values of $\kappa,\tau>0$. Additionally, this example illustrates the result stated in Corollary \ref{cor:faithfulness}. Specifically, the graph shown in Figure \ref{fig:tadpole} represents the pairwise independence graph of the vertex process for the Whittle--Matérn field with $\alpha = 1$.



\section{Discussion}\label{sec:discussion}

Although the results from this paper were established for first-order GMRFs, we believe that our findings can be extended to higher-order GMRFs, specifically those of order \(\alpha\), where \(\alpha \in \mathbb{N}\). The key idea is to consider the vector-valued field \(s \mapsto (u(s), \ldots, u^{(\alpha-1)}(s))\), which itself constitutes a first-order GMRF. While the joint covariance of all elements in this vector-valued field may not generally satisfy MTP\(_2\), we believe that the supporting results regarding MTP\(_2\) distributions and faithfulness could be extended to this more general case. In particular, this extension would involve adapting the framework to account for the precision matrix entries, which are now matrices induced by the vector-valued field \((u(s), \ldots, u^{(\alpha-1)}(s))\). This approach could provide a promising avenue for further research, allowing for a broader application of the concepts discussed in this work.

Given this extension, it might be tempting to also try to extend the result about the incompatibility between Markov assumptions and isotropy to higher order Markov fields. However, it is not even clear if it is possible to define isotropic and differentiable random fields on general metric graphs, because the construction of \citet{anderes2020isotropic} relied on complete monotonicity of the covariance function. Thus, in order to consider this extension, one would first need to show that there exists isotropic and differentiable random fields on general metric graphs. Furthermore, we conjecture that if $\Gamma$ is a compact metric graph endowed with the resistance metric such that the homogeneity condition in Definition \ref{def:homogeneous_metric} is violated, then, no Gaussian random field defined on $\Gamma$ can be simultaneously Markov of order 1 and isotropic. One should also observe that Euclidean cycles and trees, for which such fields exist, satisfy the homogeneity condition. 

Another promising direction of research lies in the application of faithfulness for simplifying metric graphs in statistical tasks such as estimation and prediction of GMRFs of order 1 on compact metric graphs. Specifically, when the data of interest is restricted to a smaller subgraph, faithfulness can help reduce the computational cost. Consider a compact metric graph $\Gamma$ with vertex set $\mathcal{V}$ and a GMRF $u$ of order 1 defined on $\Gamma$, which satisfies the conditions of Theorem \ref{thm:explicit_link}. For a subgraph $\widetilde{\Gamma} \subset \Gamma$, Theorem \ref{thm:explicit_link} guarantees that the process $\{u(s): s \in \widetilde{\Gamma}\}$ is conditionally independent of $\{u(s): s \in \Gamma \setminus \widetilde{\Gamma}\}$ given the values $\{u(s): s \in \mathcal{V} \cap \partial \widetilde{\Gamma}\}$. In practical terms, if the focus is on the subgraph $\widetilde{\Gamma}$ and information is available on $\mathcal{V} \cap \partial \widetilde{\Gamma}$, there is no need to study the full process on $\Gamma$, only the study on the smaller graph $\widetilde{\Gamma}$ is necessary. This reduction from $\Gamma$ to $\widetilde{\Gamma}$ can significantly lower the computational cost of statistical analysis, especially for large graphs. Moreover, faithfulness ensures that this reduction is optimal: once we have information on $\mathcal{V} \cap \partial \widetilde{\Gamma}$, no further reduction is possible.

\bibliographystyle{plainnat}

\bibliography{../../Bib/unified_graph_bib}
\end{document}